\documentclass[11pt]{amsart}

\usepackage{amsmath}
\usepackage{amsthm}
\usepackage{stmaryrd}
\usepackage{amsfonts,mathrsfs,amssymb}
\usepackage{times}
\usepackage{fullpage}
\setlength\arraycolsep{2pt}

\title{$L^2$ Extension of $\dbar$-closed forms from a hypersurface}
\author{Jeffery D. McNeal$^{\dagger}$} 
\thanks{$\dagger$ Partially supported an NSF grant}
\email{mcneal@math.ohio-state.edu}
\address{
Department of Mathematics \newline \indent
Ohio State University \newline \indent
Columbus, OH 43210-1174}
\author{Dror Varolin$^{\ddagger}$} 
\thanks{$\ddagger$ Partially supported NSF grant DMS-1001896}
\email{dror@math.sunsysb.edu}
\address{
Department of Mathematics \newline \indent
Stony Brook University \newline \indent
Stony Brook, NY 11794-3651}


\newcommand{\noi}{\noindent}



\newcommand{\co}{{\mathcal O}}

\newcommand{\sC}{{\mathscr C}}
\newcommand{\sd}{{\mathscr D}}

\newcommand{\sh}{{\mathscr H}}




\newcommand{\ve}{\varepsilon}
 
\newcommand{\vp}{\varphi}

\newcommand{\C}{{\mathbb C}}
\newcommand{\D}{{\mathbb D}}

\newcommand{\R}{{\mathbb R}}

\newcommand{\red}{\hfill $\diamond$}

\newcommand{\di}{\partial}
\newcommand{\dbar}{\bar \partial}

\newcommand{\re}{{\rm Re\ }}

\newcommand{\relcomp}{\subset \subset}

\newcommand{\gradbar}{{\rm grad}^{''} \!\!}
\newcommand{\ii}{\sqrt{-1}}


\newcommand{\emb}{\hookrightarrow}
\newcommand{\tensor}{\otimes}

\def\XXint#1#2#3{{\setbox0=\hbox{$#1{#2#3}{\int}$} 
\vcenter{\hbox{$#2#3$}}\kern-.5\wd0}}

\usepackage{hyperref}

\begin{document}

\theoremstyle{plain}
\newtheorem{thm}{\sc Theorem}
\newtheorem*{s-thm}{\sc Theorem}
\newtheorem{lem}{\sc Lemma}[section]
\newtheorem{d-thm}[lem]{\sc Theorem}
\newtheorem{prop}[lem]{\sc Proposition}
\newtheorem{cor}[lem]{\sc Corollary}

\theoremstyle{definition}
\newtheorem{conj}[lem]{\sc Conjecture}
\newtheorem{prob}[lem]{\sc Open Problem}
\newtheorem{defn}[lem]{\sc Definition}
\newtheorem{qn}[lem]{\sc Question}
\newtheorem{ex}[lem]{\sc Example}
\newtheorem{rmk}[lem]{\sc Remark}
\newtheorem{rmks}[lem]{\sc Remarks}
\newtheorem*{ack}{\sc Acknowledgment}




\maketitle

\setcounter{tocdepth}1



\vskip .3in



\begin{abstract}
We establish $L^2$ extension theorems for $\bar \partial$-closed $(0,q)$-forms with values in a holomorphic line bundle with smooth Hermitian metric, from a smooth hypersurface on a Stein manifold.  Our result extends (and gives a new, perhaps more classical, proof of) a theorem of Berndtsson on compact K\"ahler manifolds, which itself is a sharpening of the theorem of Koziarz.  The proof makes use of the Kohn solution, which is the solution of an (interior) elliptic problem, to handle the well-known regularity issues.  As such, our methods require the line bundle to be equipped with a smooth metric.
\end{abstract}

\section{Introduction}
Let $X$ be a K\"ahler manifold of complex dimension $n$ with smooth K\"ahler metric $\omega$, and $Z \subset X$ a smooth complex hypersurface.  Let $L \to X$ be a holomorphic line bundle with a possibly singular Hermitian metric $e^{-\vp}$ whose singular locus does not lie in $Z$, i.e., such that $e^{-\vp}|_{Z}$ is a metric for $L|_Z$.  Assume also that the line bundle $E_Z \to X$ associated to the divisor $Z$ has a holomorphic section $f_Z$ such that $Z = \{ x \in X\ ;\ f_Z(x) = 0\}$, and a singular Hermitian metric $e^{-\lambda _Z}$, such that 
\[
\sup _X |f_Z|^2e^{-\lambda _Z} = 1.
\]
A basic problem in complex analytic geometry is whether all $L$-valued, $\dbar$-closed $(0,q)$-forms on $Z$ satisfying certain $L^2$ estimates can be extended to 
$\dbar$-closed forms on $X$,  with the extension satisfying $L^2$ estimates as well.  When $q=0$, the problem is one of $L^2$ extension of {\it holomorphic} sections of $L$ from $Z$ to $X$.  There are various conditions under which such extension can be carried out, e.g. \cite{dem-ot, mv-gain}; these results are often called {\it Extension Theorems of Ohsawa-Takegoshi Type} because of the pioneering work in \cite{ot-87}.

When $q \ge 1$, the problem requires clarification for the following reasons.

\begin{enumerate}
\item There are two natural choices for the restriction to $Z$ of an $L$-valued $(0,q)$-form on $X$.  
\begin{enumerate}
\item one can pull back the form via the natural inclusion $\iota :Z \emb X$ to produce an $L$-valued $(0,q)$-form on $Z$, which we call the {\it intrinsic restriction}, or 
\item one can restrict the points at which the $L$-valued $(0,q)$-form on $X$ is specified to lie in $Z$.  That is to say, the restriction is a section of the restricted vector bundle $(L\tensor \Lambda ^{0,q}_X)|_Z \to Z$.  We call such a form the {\it ambient restriction}.
\end{enumerate}
\item The class of data on $Z$ that can be extended to  $\dbar$-closed forms on $X$ depends on the notion of restriction used.
In the intrinsic case, a form on $Z$ is the restriction of a $\dbar$-closed form if and only if this form is itself $\dbar$-closed, but there is no natural $\dbar$ operator for the restricted vector bundle $(L\tensor \Lambda ^{0,q}_X)|_Z \to Z$.  
This point is elaborated in Section \ref{S:3} (c.f. Definition \ref{restriction-defns}), where criteria for the compatibility of ambient restriction and $\dbar$-closedness are given.  
\item $\dbar$-closed forms are not automatically smooth, since $\dbar$ is not elliptic unless $q=0$. Also, it is clear that some regularity of forms on $X$ is required to make sense of their restriction to $Z$. In this paper, we consider the problem where the forms to be extended are smooth, and we seek smooth extensions.  Furthermore, we assume that our data is smooth; in particular the metrics $e^{-\vp}$ and $e^{-\lambda _Z}$ are smooth.  The extension problem makes sense for continuous forms and singular metrics, as well as more general settings, but handling these cases requires additional technical considerations.
\end{enumerate}
Our goal is to establish several extension results, beginning with the following theorem.

\begin{thm}[Ambient $L^2$ extension]\label{ambient-main}
Let the notation be as above, and denote by $\iota :Z \emb X$ the natural inclusion.  Assume that 
\[
\ii \left ((\di \dbar (\vp - \lambda_Z) + {\rm Ricci}(\omega))\right ) \wedge \omega ^{q}\ge 0
\]
and
\[
\ii (\di \dbar (\vp - (1+\delta)\lambda_Z) + {\rm Ricci}(\omega)) \wedge \omega ^{q} \ge 0
\]
for some constant $\delta > 0$.  Then there is a constant $C>0$ such that for any smooth section $\xi$ of the vector bundle $(L\tensor \Lambda ^{0,q}_X)|_Z \to Z$ satisfying
\[
\dbar (\iota ^* \xi )= 0 \quad \text{and} \quad \int _Z\frac{ |\xi|_{\omega}^2 e^{-\vp}}{|df_Z|^2e^{-\lambda _Z}} \omega^{n-1} <+\infty ,
\]
there exists a smooth $\dbar$-closed $L$-valued $(0,q)$-form $u$ on $X$ such that
\[
u|_Z = \xi \quad \text{and} \quad \int _X|u|_{\omega}^2 e^{-\vp} \frac{\omega^n}{n!} \le \frac{C}{\delta}\int _Z\frac{ |\xi|_{\omega}^2 e^{-\vp}}{|df_Z|^2e^{-\lambda _Z}} \frac{\omega^{n-1}}{(n-1)!} .
\]
The constant $C$ is universal, i.e., it is independent of all the data.
\end{thm}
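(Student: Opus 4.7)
The plan is to follow the Ohsawa--Takegoshi paradigm adapted to $(0,q)$-forms: construct a smooth ambient pre-extension $\tilde\xi$ of $\xi$, then correct it by a $\dbar$-equation whose weighted $L^2$ solution is forced to vanish on $Z$. The final extension will be $u := \tilde\xi - w$, where $w$ solves $\dbar w = \dbar\tilde\xi$ with an appropriate singular $L^2$-estimate. For the pre-extension, I use local coordinates around points of $Z$ in which $f_Z = z_1$, extending $\xi$ in each chart to be independent of $z_1,\bar z_1$, then gluing via a partition of unity and a cutoff near $Z$. The resulting $\tilde\xi$ satisfies $\tilde\xi|_Z = \xi$. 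By naturality and the hypothesis $\dbar(\iota^*\xi)=0$, one has $\iota^*(\dbar\tilde\xi) = \dbar(\iota^*\tilde\xi) = 0$, so $\dbar\tilde\xi$ vanishes intrinsically along $Z$; writing $\sigma := |f_Z|^2 e^{-\lambda_Z}$, a Poincar\'e--Lelong/residue identity relates the singular $L^2$-norm of $\dbar\tilde\xi$ against a suitable negative power of $\sigma$ to the right-hand side data $\int_Z |\xi|^2 e^{-\vp}/(|df_Z|^2 e^{-\lambda_Z})$.

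To produce $w$ with the desired bound, I apply a twisted Bochner--Kodaira--Nakano identity with a twist function $\tau$ (of the form $-\log\sigma$ modulo a bounded correction) and a weight $\psi$ of the form $\vp + \log\sigma^{-1}$ plus a twist correction. When tested against $w$ and wedged with $\omega^q$, the commutator curvature $[\ii\,\di\dbar\psi - \ii\,\di\dbar\tau,\Lambda_\omega]$ decomposes into a borderline non-negative piece $\ii\,(\di\dbar(\vp - \lambda_Z) + {\rm Ricci}(\omega))\wedge\omega^q\ge 0$ from the first hypothesis, plus a strictly positive $\delta$-remainder from the second hypothesis $\ii\,(\di\dbar(\vp - (1+\delta)\lambda_Z) + {\rm Ricci}(\omega))\wedge\omega^q\ge 0$, which absorbs the negative contribution $-\di\dbar\lambda_Z$ from $\di\dbar\tau$ with a gain of $1/\delta$. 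Finiteness of the singular $L^2$-norm of $w$ then forces $w|_Z = 0$ in the ambient sense, so $u := \tilde\xi - w$ is the desired $\dbar$-closed ambient extension, and the constant $C/\delta$ appears.

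Since $\dbar$ is non-elliptic for $q\ge 1$, the raw $L^2$-solution need not be smooth even for smooth data; this is where the Kohn solution enters. I exhaust $X$ by smoothly bounded strictly pseudoconvex subdomains $X_j$, solve $\dbar w_j = \dbar\tilde\xi$ on each $X_j$ via Kohn's canonical solution (smooth on $\overline{X_j}$ for smooth data), extract a weak $L^2$-limit using the uniform estimate from the twisted argument, and apply interior elliptic regularity for the Kodaira Laplacian $\dbar\dbar^* + \dbar^*\dbar$ to obtain a smooth $w$ on $X$. I expect the main obstacle to be the delicate balancing in the twisted identity: choosing $\tau$ and $\psi$ so that the $-\log\sigma$ singularities cancel between the two sides of the identity, leaving exactly the two curvature hypotheses and producing the sharp factor $C/\delta$ rather than a weaker bound.
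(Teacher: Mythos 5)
Your overall architecture (pre-extension, twisted Bochner--Kodaira correction forcing vanishing on $Z$, Kohn/elliptic regularity, exhaustion and weak limit) is the right family of ideas, but there is a genuine gap at the very first step, and it is precisely the point the paper spends Section \ref{S:3} overcoming. Your pre-extension $\tilde\xi$ is only smooth, not $\dbar$-closed, and you claim that $\iota^*(\dbar\tilde\xi)=0$ together with a ``residue identity'' makes the singular weighted norm of $\dbar\tilde\xi$ finite. That is false in general. Work locally with $f_Z=z_1$ and write $\xi = h(z') + d\bar z_1\wedge f(z')$, where $h$ is a $(0,q)$-form and $f$ a $(0,q-1)$-form in the variables $z'$ tangent to $Z$. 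The hypothesis $\dbar(\iota^*\xi)=0$ controls only $\dbar' h$; it says nothing about $\dbar' f$. Extending $\xi$ to be independent of $z_1,\bar z_1$ gives $\dbar\tilde\xi = \dbar'h + d\bar z_1\wedge \dbar' f$ (up to sign), so on $Z$ the ambient restriction of $\dbar\tilde\xi$ contains the term $d\bar z_1\wedge\dbar'f$, which is generically nonzero \emph{pointwise} on $Z$ even though its pullback to $Z$ vanishes. Consequently, with $\sigma=|f_Z|^2e^{-\lambda_Z}$,
\[
\int \frac{|\dbar\tilde\xi|^2 e^{-\vp}}{\sigma}\,dV_\omega \ \gtrsim\ \int_{|z_1|<1}\frac{dA(z_1)}{|z_1|^2}\ =\ +\infty ,
\]
so the datum of your correction equation $\dbar w=\dbar\tilde\xi$ has infinite norm in exactly the weight you need, and the twisted estimate cannot even be tested against it. The degenerate curvature produced by the Ohsawa--Takegoshi twist only controls contractions with $\gradbar(e^{v/2})$, so the right-hand side must have the special structure $\tilde\eta\wedge\dbar(\text{cutoff})/f_Z$; an arbitrary $\chi_\ve\,\dbar\tilde\xi/f_Z$ term is not absorbable.

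The paper's fix is to first produce a smooth, genuinely $\dbar$-\emph{closed} pre-extension $\tilde\eta$ with $\tilde\eta|_Z=\xi$ (Propositions \ref{int-ext-no-est} and \ref{ext-no-estimates}); this is itself nontrivial and requires solving auxiliary global $\dbar$-equations with regularity (Lemma \ref{3-weights}), plus the trick $\tilde\delta=\dbar(\overline{f_Z}\tensor\tilde g)$ to capture the transverse part $d\bar f_Z\wedge g$ by an exact form. Once $\tilde\eta$ is closed, the error after cutting off is $\alpha_\ve=f_Z^{-1}\dbar\chi_\ve\wedge\tilde\eta$, which is supported in $\{e^v\le\ve^2\}$, has the required structure, and has norm converging to the boundary integral over $Z$; this is what makes the constant $C/\delta$ come out. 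A secondary, fixable weakness: after solving on each $\Omega_j$ your solutions depend on $\ve$ as well as $j$, and a weak limit of them need not be smooth or canonical; the paper handles this by replacing $u_{\ve,j}$ with the minimal-norm extension $U_j$ on $\Omega_j$, shows $\dbar^*U_j=0$ (Lemmas \ref{L:1} and \ref{L:2}), hence $\Box_0 U_j=0$, and then the weak limit is $\Box_0$-harmonic, so interior elliptic regularity applies to it directly. Your plan to pass $\dbar^*w_j=0$ to the limit could be made to work, but you would still need the weight to be smooth for each fixed $\ve$ (as in the paper, where the singularity sits in the smooth twist factors $\tau,A$, not in the weight), and in any case the argument cannot start without the closed pre-extension.
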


For a smooth section $\xi$ of $L\tensor \Lambda ^{0,q}_X)|_{Z}$, the pullback $\iota ^* \xi$ is a well-defined $L$-valued $(0,q)$-form on $Z$ (c.f. Equation \eqref{intrinsic-part-defn} in Paragraph \ref{second-amb-par}).  Note that if $\eta$ is an $L$-valued $(0,q)$-form on $Z$, then the orthogonal projection $P: T^{0,1}_X|_Z \to T^{0,1}_Z$ induced by the K\"ahler metric $\omega$ maps $\eta$ to an ambient $L$-valued $(0,q)$-form, i.e., a section $P^*\eta$ of $(L \tensor \Lambda ^{0,q}_X)|_Z \to Z$, by the formula
\begin{equation}\label{kop}
\left < P^*\eta , \bar v_1 \wedge ... \wedge \bar v_q\right > := \left < \eta ,  (P\bar v_1) \wedge ... \wedge (P\bar v_q) \right> \quad \text{in }L_z
\end{equation}
for all $v_1,...,v_q \in T^{*0,1}_{X,z}$.  The map $P^*$ is an isometry for the pointwise norm on $(0,q)$-forms induced by $\omega$, and since $\iota ^*P^*\eta = \eta$, the hypotheses of Theorem \ref{ambient-main} apply to $\xi = P^*\eta$, and we obtain the following theorem.

\begin{thm}[Intrinsic $L^2$ extension] \label{intrinsic-main}
Suppose the hypotheses of Theorem \ref{ambient-main} are satisfied.  Then there is a universal constant $C>0$ such that for any smooth $\dbar$-closed $L$-valued $(0,q)$-form $\eta$ on $Z$ satisfying 
\[
\int _Z\frac{ |\eta|_{\omega}^2 e^{-\vp}}{|df_Z|^2e^{-\lambda _Z}} \omega^{n-1} <+\infty ,
\]
there exists a smooth $\dbar$-closed $L$-valued $(0,q)$-form $u$ on $X$ such that, with $\iota :Z \emb X$ denoting the natural inclusion,
\[
\iota ^* u = \eta \quad \text{and} \quad \int _X|u|_{\omega}^2 e^{-\vp} \frac{\omega^n}{n!} \le \frac{C}{\delta}\int _Z\frac{ |\eta|_{\omega}^2 e^{-\vp}}{|df_Z|^2e^{-\lambda _Z}} \frac{\omega^{n-1}}{(n-1)!} .
\]
\end{thm}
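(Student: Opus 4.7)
The plan is to deduce Theorem~\ref{intrinsic-main} directly from Theorem~\ref{ambient-main}, using the isometric lift $P^*$ defined in~\eqref{kop}. Given an intrinsic smooth $\dbar$-closed $L$-valued $(0,q)$-form $\eta$ on $Z$, I set
\[
\xi := P^* \eta,
\]
which is a smooth section of $(L \tensor \Lambda^{0,q}_X)|_Z \to Z$ by construction, and verify that $\xi$ satisfies the hypotheses of the ambient extension theorem.

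Two pointwise identities on $Z$ do the work. Since $T^{0,1}_Z \subset T^{0,1}_X|_Z$ and $P$ is the $\omega$-orthogonal projection onto $T^{0,1}_Z$, the restriction of $P$ to $T^{0,1}_Z$ is the identity; unwinding \eqref{kop} together with the definition of pullback then gives $\iota^* P^* \eta = \eta$, so in particular $\dbar(\iota^* \xi) = \dbar \eta = 0$. For the isometry claim, at any $z \in Z$ I would choose an $\omega$-unitary frame $e_1, \dots, e_n$ of $T^{1,0}_{X,z}$ with $e_1, \dots, e_{n-1}$ spanning $T^{1,0}_{Z,z}$; then $P\bar e_n = 0$ and $P\bar e_j = \bar e_j$ for $j<n$, so $P^*\eta$ has the same components as $\eta$ in the corresponding dual basis, and $|P^*\eta|_\omega = |\eta|_\omega$ at $z$. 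Both facts are in fact asserted in the paragraph preceding Theorem~\ref{intrinsic-main}. Consequently the integrability hypothesis on $\eta$ coincides pointwise with the integrability hypothesis on $\xi$ required by Theorem~\ref{ambient-main}.

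Applying Theorem~\ref{ambient-main} to $\xi$ produces a smooth $\dbar$-closed $L$-valued $(0,q)$-form $u$ on $X$ with $u|_Z = \xi$ and
\[
\int_X |u|_\omega^2 e^{-\vp} \frac{\omega^n}{n!} \le \frac{C}{\delta} \int_Z \frac{|\xi|_\omega^2 e^{-\vp}}{|df_Z|^2 e^{-\lambda_Z}} \frac{\omega^{n-1}}{(n-1)!}.
\]
Because $\iota^* u = \iota^* \xi = \eta$ and $|\xi|_\omega = |\eta|_\omega$ pointwise on $Z$, the right-hand side above is precisely the right-hand side of the estimate in Theorem~\ref{intrinsic-main}. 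There is no genuine obstacle in this argument: once the ambient version is in hand, the intrinsic version is a formal corollary, because $P^*$ furnishes a canonical, pointwise $\omega$-isometric right inverse to $\iota^*$ from intrinsic $L$-valued $(0,q)$-forms on $Z$ to sections of $(L \tensor \Lambda^{0,q}_X)|_Z$. Essentially all the analytic content of the extension problem was already absorbed into the proof of Theorem~\ref{ambient-main}.
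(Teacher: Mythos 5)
Your proposal is correct and is precisely the argument the paper itself uses: the paragraph preceding Theorem \ref{intrinsic-main} introduces the lift $P^*$ from \eqref{kop}, notes that it is a pointwise $\omega$-isometry with $\iota^* P^* \eta = \eta$, and deduces the intrinsic theorem by applying Theorem \ref{ambient-main} to $\xi = P^*\eta$. Your frame computation verifying the isometry is a correct (and slightly more detailed) justification of the claims the paper merely asserts.
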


An analog of Theorem \ref{intrinsic-main} in the situation where $X$ is a compact complex manifold was recently proved by Berndtsson in \cite{bo-forms}.  Berndtsson's work employs the method of solving the $\dbar$-equation for a current, developed in \cite{bs} .  Our approach is rather different.

In \cite{bo-forms} an interesting improvement of Theorem \ref{intrinsic-main} was observed in the compact case: if $X$ is a compact K\"ahler manifold and $Z$ is a smooth hypersurface,  a $\dbar$-closed $L$-valued form $\beta$ on $Z$ is $\dbar$-exact if and only if there are $L^2$-extensions of $\beta$ to $X$ having arbitrarily small $L^2$ norm.  Equivalently, the estimate for the extension depends only on the cohomology class of the smooth form to be extended.  

An analogous improvement is possible in the Stein case.  To state the result, we begin with the following definition.

\begin{defn}[Cohomological energy]
Let $\eta$ be a $\dbar$-closed $L|_Z$-valued $(0,q)$-form.  The number 
\[
\kappa (\eta) := \inf _{\theta}\int _Z \frac{|\theta|^2e^{-\vp}}{|df_Z|^2e^{-\lambda _Z}} \frac{\omega^{n-1}}{(n-1)!},
\]
where the infimum is taken over all forms $\theta$ such that $\theta = \eta + \dbar u$ for some $L$-valued $(0,q-1)$-form $u$ satisfying  
\[
\int _Z \frac{|u|^2e^{-\vp}}{|df_Z|^2e^{-\lambda _Z}}\omega ^{n-1} < +\infty,
\]
is called the {\it Cohomological energy} of $\eta$ ($C$-energy, for short).
\red
\end{defn}

\begin{rmk}
Since $Z$ is Stein, every $\dbar$-closed form with values in a holomorphic line bundle is $\dbar$-exact.  However, it may be the case that if $\eta = \dbar u$ then necessarily 
\[
\int _Z \frac{|u|^2e^{-\vp}}{|df_Z|^2e^{-\lambda _Z}} \omega ^{n-1} = +\infty.
\]
Thus the weighted $L^2$-cohomology need not agree with the smooth cohomology (which in the present setting is always trivial).  For example, consider the case $Z= \C$ with coordinate $z$ and the Euclidean K\"ahler form.  Let $\vp(z) = 2 \log (1+|z|^2)$ and let $\eta = d\bar z$.  Then 
\[
\int _{\C} \frac{|\eta|^2dA(z)}{(1+|z|^2)^2} < +\infty.  
\]
On the other hand, for any $u$ such that $\dbar u = \eta$ there exists $v \in\co (\C)$ such that $u = \bar z + v$, and since our weight is radially symmetric, $v \perp \bar z$ and we have 
\[
\int _{\C} \frac{|u|^2dA(z)}{(1+|z|^2)^2} = \int _{\C} \frac{|z|^2dA(z)}{(1+|z|^2)^2}  + \int _{\C} \frac{|v|^2dA(z)}{(1+|z|^2)^2} \ge \int _{\C} \frac{|z|^2dA(z)}{(1+|z|^2)^2},
\]
which is infinite.
\red
\end{rmk}

\begin{rmk}
The $C$-energy of a cohomology class is realized by an actual current, though this current may well have non-smooth coefficients.  Indeed, one takes the $\dbar$-closed form $\theta$ in the $\dbar$-cohomology class of $\eta$ that is orthogonal to the kernel of $\dbar$.  Since it is a minimizer, this form exists by the completeness of Hilbert spaces, and is unique by an elementary argument.  Establishing regularity of this minimizer is usually not elementary.
\red
\end{rmk}

Our third theorem is the following

\begin{thm}[Intrinsic extension with cohomology bounds]\label{main-i-coh}
Assume that the hypotheses of Theorem \ref{ambient-main} hold.  Let $\ve > 0$ be given.  Then for any $\dbar$-closed, $L|_Z$-valued $(0,q)$-form $\eta$ satisfying
\[
\int _Z \frac{|\eta|^2e^{-\vp}}{|df_Z|^2e^{-\lambda _Z}} \omega^{n-1} <+\infty ,
\]
there exists a $\dbar$-closed, $L$-valued $(0,q)$-form $u_{\ve}$ on $X$ such that if $\iota : Z \emb X$ is the natural inclusion then
\[
\iota ^* u_{\varepsilon} = \eta \quad \text{and} \quad \int _X |u_{\ve}|^2e^{-\vp} \frac{\omega^n}{n!} \le C (\kappa (\eta) +\ve),
\]
where the constant $C$ depends only on $\delta$.
\end{thm}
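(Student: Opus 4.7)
The plan is to apply Theorem~\ref{intrinsic-main} to a near-minimizer of $\kappa(\eta)$ within the weighted $L^2$-cohomology class of $\eta$, and then correct the resulting extension by a $\dbar$-exact form so that its intrinsic restriction becomes $\eta$ rather than the chosen representative.

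By the definition of $\kappa(\eta)$ as an infimum, one can pick an $L|_Z$-valued $(0,q-1)$-form $w$ on $Z$ with $\int_Z|w|^2e^{-\vp}/(|df_Z|^2e^{-\lambda_Z})\,\omega^{n-1}<\infty$ such that $\theta:=\eta+\dbar w$ satisfies
\[
\int_Z\frac{|\theta|^2e^{-\vp}}{|df_Z|^2e^{-\lambda_Z}}\frac{\omega^{n-1}}{(n-1)!}\le\kappa(\eta)+\varepsilon.
\]
Since $\theta$ is $\dbar$-closed on $Z$ and the curvature hypotheses are in force, Theorem~\ref{intrinsic-main} applied to $\theta$ produces a smooth $\dbar$-closed $L$-valued $(0,q)$-form $\tilde\theta$ on $X$ with $\iota^*\tilde\theta=\theta$ and
\[
\int_X|\tilde\theta|^2e^{-\vp}\,\frac{\omega^n}{n!}\le\frac{C}{\delta}\bigl(\kappa(\eta)+\varepsilon\bigr).
\]

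The form $\tilde\theta$ extends the wrong representative, so the natural ansatz is
\[
u_\varepsilon:=\tilde\theta-\dbar W,
\]
where $W$ is an $L$-valued $(0,q-1)$-form on $X$ with $\iota^*W=w$. Any such $u_\varepsilon$ is automatically $\dbar$-closed, and $\iota^*u_\varepsilon=\iota^*\tilde\theta-\dbar(\iota^*W)=\theta-\dbar w=\eta$, giving the required pullback.

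The main obstacle is to choose $W$ so that $\int_X|u_\varepsilon|^2e^{-\vp}\omega^n/n!\le C(\kappa(\eta)+\varepsilon)$ for a constant $C$ depending only on $\delta$: the naive construction of $W$ via a tubular-neighborhood cutoff extension of $w$ only yields a bound in terms of $\|w\|_{L^2(Z)}^2$ and $\|\dbar w\|_{L^2(Z)}^2$, neither of which is controlled by $\kappa(\eta)$ alone. Instead, I would formulate the search for $W$ as a constrained $\dbar$-minimization on $X$, minimizing $\|\tilde\theta-\dbar W\|_{L^2(X)}^2$ over the affine space $\{W:\iota^*W=w\}$, and solve it using the same Kohn-style elliptic regularization that underlies the proof of Theorem~\ref{intrinsic-main}. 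Under the stated positivity hypotheses the Bochner--Kodaira--Nakano identity supplies the a priori $L^2$ estimate, and the dual characterization of the minimum---as a supremum of pairings $|\int_Z\eta\wedge\iota^*\beta|^2/\|\dbar^*\beta\|_X^2$ over test forms $\beta$ on $X$---depends on $\eta$ only through its weighted $L^2$-cohomology class on $Z$, by integration by parts using $\dbar\eta=0$ and Stokes' theorem on the Stein manifold $Z$. Pairing this cohomology-only dependence with the $\tilde\theta$-bound from Step~2 produces the desired estimate, completing the proof.
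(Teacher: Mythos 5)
Your reduction has the same skeleton as the paper's: pick a (near-)minimizing representative $\theta=\eta+\dbar w$ of the $C$-energy, extend $\theta$ by Theorem \ref{intrinsic-main} with norm at most $\tfrac{C}{\delta}(\kappa(\eta)+\ve)$, and then dispose of the exact part --- equivalently, produce a $\dbar$-closed form on $X$ whose pullback to $Z$ is $\dbar w$ and whose $L^2$ norm is arbitrarily small. (The paper writes $\eta=\theta+\dbar\xi$ with $\theta$ the minimal-norm representative and does exactly this; your use of a near-minimizer is fine, and even sidesteps the regularity question for the exact minimizer.) The gap is that you never actually solve this last problem, and it is the entire content of the theorem beyond Theorem \ref{intrinsic-main}. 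Your proposed mechanism --- minimize $\|\tilde\theta-\dbar W\|^2$ over $\{W:\iota^*W=w\}$ and invoke a ``dual characterization'' $\sup_\beta |\int_Z\eta\wedge\iota^*\beta|^2/\|\dbar^*\beta\|_X^2$ --- is a gesture, not an argument. The pairing as written is not even well-formed (it omits the weights $e^{-\vp}$ and $|df_Z|^{-2}e^{\lambda_Z}$ and the factor needed to produce an $(n-1,n-1)$-form on $Z$); the Stokes argument you cite to get dependence only on the cohomology class needs vanishing of boundary contributions at infinity on the \emph{noncompact} Stein manifold $Z$, which is precisely the non-compactness obstruction the paper flags when explaining why Berndtsson's compact argument does not carry over; and even granting class-dependence, to bound the supremum by $C(\kappa(\eta)+\ve)$ you would need a trace-type inequality controlling $\iota^*\beta$ on $Z$ by $\|\dbar^*\beta\|_{L^2(X)}$ --- that is, you would be re-proving an Ohsawa--Takegoshi-type estimate inside this step, with nothing in the sketch supplying it.

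What the paper does at this point is a concrete construction that your proposal is missing. Since closed forms on the Stein manifold $X$ are exact with interior-elliptic regularity (Lemma \ref{3-weights}), the extension $\beta$ of the exact part given by Theorem \ref{intrinsic-main} can be written $\beta=\dbar\theta_0$ for a smooth potential $\theta_0$ on $X$; one then chooses an auxiliary weight $h$ with $\int_X|\theta_0|^2e^{-(\vp+h)}\,\omega^n<\infty$ and a cutoff $\rho$ with $\rho\equiv 1$ on a neighborhood of $Z$, supported in a small neighborhood $V(Z)$, satisfying $\int_X|\di\rho|^2_\omega e^{h}\,\omega^n\le\ve^2$. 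The existence of such $\rho$ is Lemma \ref{hyp-lem}: because the punctured disk is complete for its Poincar\'e metric, there are local cutoffs $\rho_{\ve}$ with
\[
\int \ii\,\di\rho_{\ve}\wedge\dbar\rho_{\ve}\wedge\omega^{n-1}=O(\ve),
\]
and these are patched over a locally finite cover of $Z$ by a partition of unity, with the local parameters $\ve_j$ chosen small enough to beat the weight $e^{h}$. Then $\tilde\alpha:=\dbar(\rho\,\theta_0)$ is $\dbar$-closed, satisfies $\iota^*\tilde\alpha=\iota^*\beta$, and
\[
\int_X|\tilde\alpha|^2e^{-\vp}\,\omega^n\le 2\int_{V(Z)}|\beta|^2e^{-\vp}\,\omega^n+2\Bigl(\int_X|\theta_0|^2e^{-(\vp+h)}\,\omega^n\Bigr)\Bigl(\int_X|\di\rho|^2_\omega e^{h}\,\omega^n\Bigr)
\]
is as small as desired. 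Without this lemma (or a genuine substitute for it), your proof does not close.
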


The analog of Theorem \ref{main-i-coh} for ambient extension constitutes our final result.

\begin{thm}[Ambient extension with cohomology bounds]\label{main-a-coh}
Assume that the hypotheses of Theorem \ref{ambient-main} hold.  Let $\ve > 0$ be given.  Then for any smooth section $\xi$ of $(L\tensor \Lambda ^{0,q}_X)|_Z \to Z$ satisfying  
\[
\dbar \iota ^*\xi = 0 \quad \text{and} \quad \int _Z \frac{|\xi|^2e^{-\vp}}{|df_Z|^2e^{-\lambda _Z}} \omega^{n-1} <+\infty ,
\]
there exists a $\dbar$-closed, $L$-valued $(0,q)$-form $u_{\ve}$ on $X$ such that if $\iota : Z \emb X$ is the natural inclusion then
\[
u_{\varepsilon}|_Z = \xi \quad \text{and} \quad \int _X |u_{\ve}|^2e^{-\vp} \frac{\omega^n}{n!} \le C (\kappa (\iota ^*\xi) +\ve),
\]
where the constant $C$ depends only on $\delta$.
\end{thm}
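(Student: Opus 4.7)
The plan is to reduce Theorem \ref{main-a-coh} to its intrinsic counterpart, Theorem \ref{main-i-coh}, and then to absorb the ambient/intrinsic discrepancy by a $\dbar$-closed correction of arbitrarily small $L^2$ norm concentrated near $Z$. First, set $\eta := \iota^*\xi$. By hypothesis $\dbar\eta = 0$ and $\eta$ has finite weighted $L^2$ norm on $Z$, so applying Theorem \ref{main-i-coh} with $\varepsilon/2$ in place of $\varepsilon$ produces a smooth, $\dbar$-closed, $L$-valued $(0,q)$-form $w$ on $X$ with $\iota^* w = \eta$ and $\int_X |w|^2 e^{-\vp}\frac{\omega^n}{n!} \leq C_1(\kappa(\eta)+\varepsilon/2)$. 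Form the ambient defect $\nu := \xi - w|_Z$, a smooth section of $(L\tensor \Lambda^{0,q}_X)|_Z$. Because $\iota^*\nu = \eta - \iota^* w = 0$, the form $\nu$ is normal-valued: in tubular coordinates near $Z$ it has the shape $d\bar f_Z \wedge \nu'$ for a $(0,q-1)$-form $\nu'$ on $Z$.

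The heart of the proof is to construct, for every $\varepsilon'' > 0$, a smooth $\dbar$-closed $L$-valued $(0,q)$-form $\rho$ on $X$ with $\rho|_Z = \nu$ and $\int_X|\rho|^2 e^{-\vp}\frac{\omega^n}{n!} < \varepsilon''$. Starting from a holomorphic retraction $\pi$ of a tubular neighborhood of $Z$ and a smooth radial cutoff $\chi$ with $\chi(0)=1$ supported near $0$, one introduces the concentrated ansatz $\sigma_t := \chi(|f_Z|^2/t^2)\, d\bar f_Z \wedge \pi^*\nu'$. By construction $\sigma_t|_Z = \nu$ and $\|\sigma_t\|_X^2 = O(t^2)$, and the identity $d\bar f_Z\wedge d\bar f_Z=0$ forces $\dbar\sigma_t$ to be supported in $\{|f_Z|<t\}$ and to vanish on $Z$. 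One then solves $\dbar\tau = \dbar\sigma_t$ with the imposed trace condition $\tau|_Z = 0$ and an $L^2$ estimate $\|\tau\|_X^2 = O(t^2)$, obtained either by a second application of Theorem \ref{ambient-main} or by the twisted weighted estimates underlying its proof. Setting $\rho := \sigma_t - \tau$ and choosing $t$ small produces the desired form.

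With $\rho$ chosen so that $\|\rho\|_X^2 \leq \varepsilon/(4C_1)$, define $u_\varepsilon := w + \rho$. Then $\dbar u_\varepsilon = 0$, $u_\varepsilon|_Z = w|_Z + \nu = \xi$, and
\[
\int_X |u_\varepsilon|^2 e^{-\vp}\frac{\omega^n}{n!} \leq 2\bigl(\|w\|_X^2 + \|\rho\|_X^2\bigr) \leq C\bigl(\kappa(\iota^*\xi)+\varepsilon\bigr),
\]
with $C$ depending only on the perturbation constant from the curvature hypothesis, as required. The main obstacle is the correction step above: producing a $\dbar$-closed extension of a prescribed normal-valued ambient trace with arbitrarily small $L^2$ norm. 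This depends crucially on the normal-valued structure of $\nu$, which guarantees that $\dbar\sigma_t$ vanishes on $Z$, and on a controlled solution of the auxiliary $\dbar$-equation respecting the trace-zero condition; without these, no concentration gain would be available.
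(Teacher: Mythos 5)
Your overall reduction is exactly the paper's: apply Theorem \ref{main-i-coh} to $\iota^*\xi$, observe that the defect $\nu=\xi-w|_Z$ is a transverse ambient form (the paper phrases this via the decomposition $\xi=\xi_{\pitchfork}+P^*\iota^*\xi$ and Proposition \ref{trans-part}), and extend $\nu$ by a $\dbar$-closed form of arbitrarily small $L^2$ norm. The gaps are in your execution of that last step. First, your key claim that $\dbar\sigma_t$ ``vanishes on $Z$'' is false. In the local model $Z=\{w=0\}$, $\nu=f(z)\wedge d\bar w$, the cutoff term in $\dbar\sigma_t$ does die (because $\dbar\chi(|w|^2/t^2)$ is proportional to $d\bar w$ and $d\bar w\wedge d\bar w=0$), but what survives is $\chi\,\dbar_z f\wedge d\bar w$, whose ambient restriction to $Z$ is $\dbar_z f\wedge d\bar w\neq 0$ in general; only its pullback $\iota^*$ vanishes. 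This breaks your construction of $\tau$: Theorem \ref{ambient-main} is an extension theorem, not a trace-zero $\dbar$-solver, and the twisted/Ohsawa--Takegoshi estimates force $\tau|_Z=0$ only when the data is divisible by $f_Z$ in $L^2$, i.e.\ when $\int|f_Z|^{-2}|\dbar\sigma_t|^2e^{-\vp}\,dV_{\omega}<\infty$; since $\dbar\sigma_t$ does not vanish on $Z$, this integral diverges like $\int_{|w|<t}|w|^{-2}\,dA(w)$. A trace-zero solution does exist, namely $\tau=\sigma_t-\dbar\left((-1)^{q-1}\chi\,\overline{f_Z}\,\pi^*\nu'\right)$, all of whose terms carry a factor $\overline{f_Z}$ --- but writing that down is precisely the paper's construction, which takes the manifestly $\dbar$-exact ansatz $u:=\dbar\left((-1)^{q-1}\rho_{\ve}\tilde\alpha\,\overline{f_Z}\right)$ from the start, is automatically closed, and never needs an auxiliary $\dbar$-equation. (There is also the smaller issue that $d\overline{f_Z}\wedge\pi^*\nu'$ is not a well-defined $L$-valued form off $Z$, since $d\overline{f_Z}$ is not tensorial off $Z$; the paper avoids this by only applying $\dbar$ to the genuine $L$-valued form $\tilde\alpha\cdot\overline{f_Z}$.)

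Second, your smallness claims $\|\sigma_t\|_X^2=O(t^2)$ and $\|\tau\|_X^2=O(t^2)$ silently assume that $\nu'$ (and the $\dbar$ of its extension) is square-integrable over $Z$ against the weight $e^{-\vp}/(|df_Z|^2e^{-\lambda_Z})$. That is not available: $\nu=\xi-w|_Z$, and Theorem \ref{main-i-coh} controls only $\int_X|w|^2e^{-\vp}\,dV_{\omega}$, which gives no control whatsoever on the ambient trace $w|_Z$ along the measure-zero set $Z$. On a compact manifold smoothness would rescue the claim; on a Stein manifold the tube integral $\int_{\{|f_Z|<t\}}|d\overline{f_Z}\wedge\pi^*\nu'|^2e^{-\vp}\,dV_{\omega}$ can be infinite for every $t$, so no choice of your single global parameter $t$ works. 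This is exactly the noncompactness issue the paper isolates: its cutoff $\rho=\sum_j\chi_jF_j^*\rho_{\ve_j}$ (Lemma \ref{hyp-lem}) has width parameters $\ve_j$ chosen locally, shrinking as rapidly as needed at infinity, and satisfies $\int_X|\di\rho|^2_{\omega}e^h\,\omega^n\le\ve^2$ for any prescribed smooth weight $e^h$; this makes every error term small against data that is merely locally bounded, with no integrability over $Z$ required. So your reduction is correct, but the correction step must be replaced by the paper's exact-form construction with the adaptive (complete-hyperbolic) cutoff; as written, both the vanishing claim and the $O(t^2)$ estimates fail.
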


\begin{rmk}
In Theorems \ref{main-i-coh} and \ref{main-a-coh}, if the forms to be extended already realize the $C$-energy of their $L^2$-cohomology class
(i.e. have minimal norm in their class) , then the error $\ve$ in the estimate can be removed; indeed, in this case, Theorems \ref{intrinsic-main} and \ref{ambient-main} respectively apply.
\red
\end{rmk}

As with Theorems \ref{ambient-main} and \ref{intrinsic-main}, Theorem \ref{main-a-coh} on ambient extensions generalizes Theorem \ref{main-i-coh} on intrinsic extensions.  However, the proofs of the various theorems are interconnected, and we have stated them in the order that reflects the logic of our proofs.

The results above immediately imply analogous extension theorems in projective manifolds, since by Lemme 6.9 in \cite{dem-82a}) a solution of $\dbar$ with $L^2$ estimates on the complement of a divisor extends across that divisor.  Thus, the projective case is reduced to the Stein case upon removing a very ample hypersurface.  Similarly, our results hold for so-called {\it essentially Stein manifolds}, i.e., manifolds that are Stein after one removes a hypersurface.

The problem of extending $\dbar$-closed $(0,q)$-forms with values in a holomorphic line bundle seems to have first been considered by Manivel \cite{manivel}, who indicated that his proof for the case $q=0$ extended to the case of higher $q$.  That this was not so was indicated by Demailly in the paper \cite{dem-ot} which also contained a conjectural approach to a proof.  Demailly's conjectural approach has still not been realized, and it would be very interesting to establish, since it might, among other things, lead to establishing the extension theorem in the setting where the Hermitian metrics are singular.  The first key breakthrough was made by Koziarz \cite{koz}, who established the extension of cohomology classes on compact manifolds, but with estimates that depend on the underlying compact manifold, and methods that do not extend to any non-compact setting.  By observing that exact forms extend on compact manifolds to closed forms with arbitrarily small $L^2$ norm, Berndtsson \cite{bo-forms} showed that Koziarz's theorem essentially solved the extension problem on compact manifolds,  and moreover Berndtsson found the improved estimates that one hoped for in the compact case.  There is a link between Berndtsson's approach and Demailly's program; however, Berndtsson established his results for compact manifolds. There are some difficulties in passing to the open case, though it is conceivable that Berndtsson's approach could be extended to open K\"ahler manifolds, perhaps with some additional convexity assumptions.

\begin{ack}
The second author would like to thank Seb Boucksom for useful discussions, and Bo Berndtsson for sending him an early version of \cite{bo-forms}.  Part of this work was done while the second author was visiting the University of Michigan in the Winter semester of 2012, and he thanks the mathematics department for excellent working conditions, and especially Mattias Jonsson for many interesting discussions and for being a wonderful host.
\end{ack}

\section{Notation and Background}

In this section, some background material needed in the rest of the paper is reviewed.  Everything discussed here is well-known, but summarized for readers who are better versed in either the geometric or the analytic aspects of the area, though perhaps not both.  The expert may rapidly skim or skip this section.

\subsection{Review of geometry of Hermitian vector bundles}

A (complex linear) connection $D$ for a vector bundle $E \to X$ is a differential operator of (not necessarily pure) order $1$ that maps sections of $E$ to $1$-forms with values in $E$, i.e., sections of $(\C \tensor T^*_X) \tensor E$, that also satisfies the Leibniz rule
\[
D(fs) = df \tensor s +fDs.
\]
This Leibniz rule therefore determines the connection from its values on a frame, and in such a frame for $E \to X$ (which identifies $E$ with a trivial vector bundle) the connection differs from the derivative of the section by a term of order $0$; we say that it is a {\it twisted exterior derivative}.  The connection induces {\it twisted exterior derivatives} on $E$-valued differential forms, i.e., it is extended to a map sending $E$-valued differential forms of order $r$ to those of order $r+1$:
\[
D : \Gamma (X, \sC _X^{\infty}(\Lambda ^r (\C \tensor T^*_X) \tensor E)) \to \Gamma (X, \sC_X ^{\infty}(\Lambda ^{r+1} (\C \tensor T^*_X) \tensor E)).
\]
Again the connection is required to satisfy a Leibniz rule compatible with the skew symmetry of differential forms: if we have a local section $s$ of $E$ and a differential $r$-form $\beta$, then the $E$-valued differential $r$-form $\beta \tensor s$ has covariant derivative 
\begin{equation}\label{ext-connection}
D (\beta \tensor s) = d\beta \tensor s + (-1)^{r} \beta \wedge Ds.
\end{equation}
As usual, one multiplies by $(-1)^r$ to pick out the skew-symmetric part of the second derivatives.

In general there are many connections for a given vector bundle, but when the vector bundle is holomorphic and equipped with a Hermitian metric,  there is exactly one connection $D$, the so-called {\it Chern connection}, that is compatible with the metric, i.e., satisfies
\[
d(s,\sigma) = (Ds,\sigma) + (s, D\sigma),
\]  
and splits into its $(1,0)$ and $(0,1)$ parts as  
\[
D = D^{1,0} + \dbar.
\]
It is not difficult to show that for a holomorphic line bundle $E \to X$ with Hermitian metric $h$, the Chern connection is given in terms of a frame $\xi$ by 
\[
D (f\xi) = (d f -(-1)^r f \wedge \di \vp ^{(\xi)})\tensor \xi,
\]
where $f$ is a $(0,r)$-form and $\vp^{(\xi)} = - \log h(\xi,\bar \xi)$.  A simple calculation shows that the curvature $\Theta$ of the Chern connection $D$, defined to be the operator $D^2$, is given by the formula 
\[
D^2(f\tensor \xi) = \di \dbar \vp ^{(\xi)} \wedge f \tensor \xi.
\]
As the reader can easily check, the form $\di \dbar \vp ^{(\xi)}$ is globally defined and independent of the frame.  This invariance accounts for our (common) abusive notation $h = e^{-\vp}$ and $\di \dbar \vp$ for the metric and curvature respectively.  

In the rest of the paper, our $(0,r)$-forms will take values in a line bundle.

\subsection{Pointwise inner products and Hermitian forms for $E$-valued $(0,r)$-forms}

Equipped with the K\"ahler metric $\omega$ for $X$ and the line bundle metric $e^{-\vp}$ for $E \to X$, we can define $L^2$ spaces of $E$-valued $(0,r)$-forms as follows.  Let 
\[
dV_{\omega} := \frac{\omega ^n}{n!}
\]
be the usual volume form associated to the K\"ahler metric $\omega$.  Suppose $\beta^1,\beta^2$ are $E$-valued $(0,r)$-forms, given locally with respect to a frame $\xi$ of $E$ by $\beta^i = f^i \tensor \xi$.  Then we have a well-defined $(r,r)$-form 
\[
\beta^1 \wedge \overline{\beta^2} e^{-\vp} = f^1 \wedge \overline{f^2} e^{-\vp ^{(\xi)}},
\]
and therefore a well-defined function $\left < \beta^1 , \beta ^2\right >_{\omega}e^{-\vp}$ defined by the equality of the $(n,n)$-forms 
\[
\left < \beta^1 , \beta ^2\right >_{\omega}e^{-\vp} dV_{\omega}  = \frac{(\ii)^{r(r+2)}}{r!(n-r)!} \omega ^{n-r} \wedge \beta^1 \wedge \overline{\beta^2} e^{-\vp}.
\]
We shall write 
\[
|\beta|^2_{\omega} e^{-\vp} = \left < \beta , \beta \right >_{\omega}e^{-\vp}.
\]
Finally, given a Hermitian $(1,1)$-form $\Theta$, we define a Hermitian form 
\[
\left < \Theta\beta ^1,\beta^2 \right >_{\omega}e^{-\vp}.
\]
on $E$-valued $(0,r)$-forms induced from $\Theta$, $e^{-\vp}$, and $\omega$.  In this formula, for an $E$-valued $(0,r)$-form $\beta$, the form $\Theta \beta$ is an $E$-valued $(0,r)$-form defined as follows.  The K\"ahler form induces a duality between $(1,1)$-forms and $(0,1)$-forms with values in $T^{0,1}_X$.  We can then contract any $(0,r)$-form with $\Theta$ to obtain a new $(0,r)$-form.  In terms of an orthonormal frame of $(1,0)$ forms $\alpha ^1,...,\alpha ^n$ for $\omega$ we can write $\Theta = \Theta _{i\bar j} \alpha ^i \wedge \bar \alpha _j$, and then $\Theta$ acts on the $(0,r)$-form $\beta = \beta _{\bar I} \bar \alpha ^I$ by 
\[
\Theta \beta = \Theta _{i \bar j_k} \beta _{\bar j_1 ... \bar j_{k-1} (\bar i)_k \bar j_{k+1} \bar j_r} \bar \alpha ^J.
\]
It is not difficult to show that this form is positive on twisted $(0,r)$-forms if and only if 
\[
\Theta \wedge \omega ^{r-1}
\]
is a positive $(r,r)$-form.

\subsection{Positivity of metrics and convexity of boundaries for $(0,q)$-forms}

In our main theorems, non-negativity of the $(q,q)$-forms
\[
\ii (\di \dbar \vp +{\rm Ricci}(\omega)) \wedge \omega ^{q} \quad \text{and} \quad\ii (\di \dbar \vp +{\rm Ricci}(\omega) - \delta \di \dbar \lambda _Z) \wedge \omega ^{q}.
\]
is stipulated. 
Although a precise idea of what this non-negativity requires of the $(1,1)$-forms $\di \dbar \vp +{\rm Ricci}(\omega)$ and $\di \dbar \vp +{\rm Ricci}(\omega) - \delta\di \dbar \lambda _Z$ is not needed below, it is instructive to unravel these conditions somewhat.  To this end, let us examine more carefully the action of a Hermitian $(1,1)$-form $\Theta$ on an $L$-valued $(0,r)$-form $\beta$ at a point.  Choose a local orthonormal frame of $(1,0)$-forms $\alpha ^1, ..., \alpha ^n$ for $\omega$ that diagonalizes $\Theta$, i.e., such that
\[
\omega = \frac{\ii}{2} \left (\alpha ^1 \wedge \bar \alpha ^1 + ... + \alpha ^n \wedge \bar \alpha ^n\right ) \quad \text{and} \quad \Theta = \frac{\ii}{2} \left (\theta _1 \alpha ^1 \wedge \bar \alpha ^1 + ... + \theta _n \alpha ^n \wedge \bar \alpha ^n \right ).
\]
If $\xi$ is a frame for $L$, then we can write 
\[
\beta = \sum _{|J|=r} f_J \bar \alpha ^J \tensor \xi.
\]
We therefore have 
\[
\left < \Theta \beta ,\beta \right > _{\omega} e^{-\vp} dV_{\omega} =  \sum _{|J|=r} \left (\sum _{i \in J} \theta _i\right ) |f_J|^2e^{-\vp^{(\xi)}}  dV_{\omega}.
\]

\begin{defn}Let $(X,\omega)$ be a K\"ahler manifold.
\begin{enumerate}
\item[(P1)] A Hermitian $(1,1)$-form $\Theta$ is said to be $r$-positive with respect to $\omega$ if for all $(0,r)$-forms $\beta$, 
\[
\left < \Theta \beta , \beta \right > _{\omega} \ge 0.
\]
Equivalently, the sum of the smallest $r$ eigenvalues of $\Theta$ with respect to $\omega$ is non-negative.
\item[(P2)] We say that a metric $e^{-\vp}$ is $r$-positively curved with respect to $\omega$ if the form $\ii \di \dbar \vp$ is $r$-positive with respect to $\omega$.
\item[(P3)] The smooth boundary $\di \Omega$ of a domain $\Omega = \{ \rho < 0\}$ (where as usual smoothly bounded domain means $d\rho \neq 0$ at any point of $\di \Omega$) is said to be $r$-pseudoconvex if the form $\ii \di \dbar \rho$ is $r$-positive with respect to $\omega$ on the complex cotangent space $T^{*1,0}_{\di \Omega} := T^*_{\Omega} \cap JT^*_{\Omega}$.
\red
\end{enumerate}
\end{defn}

\begin{rmk}
It follows immediately that $r$-positivity (resp. $r$-pseudoconvexity) implies $s$-positivity (resp. $s$-pseudoconvexity) for any $s > r$.
\red
\end{rmk}

\begin{rmk}
In the strongest case, when $r=1$, the usual notions of positivity and pseudoconvexity respectively are recovered.  Note however that our notion of $r$-pseudoconvexity is not the same as Andreotti-Grauert convexity. 
In particular, the following example shows that when $r \ge 2$, the condition of $r$-positivity depends on the metric $\omega$ :
\red
\end{rmk}

\begin{ex}
Working in $\C ^n= \C^{n-1} \times \C$ with coordinates $z=(z',\zeta)$, consider the two metrics 
\[
\omega _1 = \ii \di \dbar |z|^2 \quad \text{ and } \quad \omega _2 = 2\ve \ii \di \dbar |z'|^2 + \ii d\zeta \wedge d\bar \zeta.
\]
Then for any $\ve \in (0,1)$, the form 
\[
\Theta = \ve \ii \di \dbar |z'|^2 - \ii d\zeta \wedge d\bar \zeta
\]
is $2$-positive with respect to $\omega _2$ but not with respect to $\omega _1$.
\red
\end{ex}

\subsection{The Bochner-Kodaira-Morrey-Kohn Identity}

The basic tool in the subsequent $L^2$ arguments is an integral identity for smooth forms in the domain of the $\dbar$-Laplacian, here called the Bochner-Kodaira-Morrey-Kohn identity.

\begin{d-thm}\label{bkmk}
Let $(X,\omega)$ be a K\"ahler manifold of complex dimension $n$, and $L \to X$ a holomorphic line bundle with smooth Hermitian metric $e^{-\vp}$.  Let $\Omega \relcomp X$ be a domain with smooth boundary, and let $\rho :X \to \R$ be a smooth function such that $\Omega = \{ \rho < 0\}$ and  $|d\rho|_{\omega} \equiv 1$ on $\di \Omega$.  Then for any smooth $L$-valued $(0,q+1)$-form $\beta$ in the domain of $\dbar ^*$, one has the identity 
\begin{eqnarray}\label{bk-id-general}
\nonumber&& \int _{\Omega} |\dbar ^* \beta|^2 _{\omega} e^{-\vp} dV_{\omega} + \int _{\Omega} |\dbar \beta|^2 _{\omega} e^{-\vp}dV_{\omega} \\
&& \qquad = \int _{\Omega} |\overline{\nabla} \beta|^2 _{\omega} e^{-\vp}dV_{\omega} + \int _{\Omega} \left < \ii (\di \dbar \vp + {\rm Ricci}(\omega) ) \beta, \beta \right > _{\omega} e^{-\vp}dV_{\omega}\\
\nonumber &&\qquad \qquad + \int _{\di \Omega}  \left < \ii (\di \dbar \rho) \beta, \beta \right > _{\omega} e^{-\vp} dS _{\di \Omega}.
\end{eqnarray}
\end{d-thm}

The operator $\dbar^*$ is the adjoint of $\dbar$ relative to the inner product determined by $\omega$ and $e^{-\vp}$. A proof of Theorem \ref{bkmk} can be found, e.g., in \cite{varolin-pcmi}.

\subsection{The twisted Bochner-Kodaira-Morrey-Kohn Identity}

The twisted identity is obtained from \eqref{bk-id-general} by decomposing the metric $e^{-\vp}$ for $L$ as 
\[
e^{-\vp} = \tau e^{-\psi}
\]
for a smooth positive function $\tau$ and a smooth metric $e^{-\psi}$ for $L$.  One computes that 
\[
\dbar ^* _{\vp} \beta = \dbar ^* _{\psi} \beta - \frac{1}{\tau} (\gradbar \tau) \rfloor \beta.
\]
Here $\gradbar \tau$ is the {\it gradient} $(0,1)$-vector field induced from the $(1,0)$-form $\di \tau$ by $\omega$, i.e., the $(0,1)$-vector field characterized by   
\[
\langle \xi, \overline{\gradbar \tau} \rangle_{\omega} = \di \tau (\xi), \quad \xi \in T^{1,0} _X.
\]
Observe that
\[
|(\gradbar \tau) \rfloor \beta|^2_{\omega} e^{-\vp} = \left < \ii (\di \tau \wedge \dbar \tau) \beta , \beta \right > _{\omega}e^{-\vp}.
\]
Next the curvatures of $e^{-\vp}$ and $e^{-\psi}$ are linked by the identity
\[
\di \dbar \vp = \di \dbar (\psi - \log \tau) = \di \dbar \psi - \frac{\di \dbar \tau}{\tau} + \frac{\di \tau \wedge \dbar \tau}{\tau ^2}.
\]
Substitution into \eqref{bk-id-general} yields the so-called {\it twisted Bochner-Kodaira-Morrey-Kohn Identity} 
\begin{eqnarray}\label{twisted-bk-id}
\nonumber&& \int _{\Omega} \tau |\dbar ^*_{\psi} \beta|^2 _{\omega} e^{-\psi} dV_{\omega} + \int _{\Omega} \tau |\dbar \beta|^2 _{\omega} e^{-\psi}dV_{\omega} \\
&&  = \int _{\Omega} \left <\ii \{ \tau (\di \dbar \vp + {\rm Ricci}(\omega) )  - \di \dbar \tau \}\beta, \beta \right > _{\omega} e^{-\psi}dV_{\omega} + 2\re \int _{\Omega} \left < \dbar ^* _{\psi} \beta , \gradbar \tau \rfloor \beta \right > _{\omega} e^{-\psi} dV_{\Omega}\\
\nonumber && \qquad \qquad + \int _{\Omega} \tau |\overline{\nabla} \beta|^2 _{\omega} e^{-\psi}dV_{\omega}  + \int _{\di \Omega}  \left < \ii(\tau\di \dbar \rho) \beta, \beta \right > _{\omega} e^{-\psi} dS _{\di \Omega},
\end{eqnarray}
which holds for all smooth forms in the domain of $\dbar ^*_{\psi}$.

By applying to the second integral on the right hand side of \eqref{bk-id-general} the Cauchy-Schwarz Inequality, followed by the inequality $ab \le Aa^2+A^{-1}b^2$, one obtains
\[
 2\re \left < \dbar ^* _{\psi} \beta , \gradbar \tau \rfloor \beta \right > _{\omega} \le A |\dbar ^*_{\psi}\beta |^2_{\omega} + A^{-1} \left < (\di \tau \wedge \dbar \tau)\beta ,\beta\right > _{\omega}.
\] 
Thus, the following inequality holds:

\begin{lem}[Twisted Basic Estimate] \label{tbe}
Let $(X,\omega)$ be a Stein K\"ahler manifold and let $L \to X$ be a holomorphic line bundle with smooth Hermitian metric $e^{-\psi}$.  Let $A$ and $\tau$ be positive functions with $\tau$ smooth.  Fix a smoothly bounded domain $\Omega \relcomp X$ such that $\di \Omega$ is pseudoconvex.  Then for any smooth $L$-valued $(0,q+1)$-form $\beta$ in the domain of $\dbar ^*_{\psi}$ one has the estimate 
\begin{eqnarray}\label{tbest}
\nonumber && \int _{\Omega} (\tau +A)|\dbar ^*_{\psi} \beta |^2_{\omega} e^{-\psi} dV_{\omega} + \int _{\Omega} \tau|\dbar  \beta |^2_{\omega} e^{-\psi} dV_{\omega} \\
&& \qquad \ge \int _{\Omega} \left <\ii \{ \tau (\di \dbar \psi + {\rm Ricci}(\omega) )  - \di \dbar \tau - A^{-1}\di \tau \wedge \dbar \tau \}\beta, \beta \right > _{\omega} e^{-\psi}dV_{\omega}.
\end{eqnarray}
\end{lem}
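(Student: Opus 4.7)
The plan is to read the estimate \eqref{tbest} off directly from the twisted Bochner-Kodaira-Morrey-Kohn identity \eqref{twisted-bk-id} by controlling only the mixed term $2\re \int_\Omega \langle \dbar^*_\psi \beta, \gradbar \tau \rfloor \beta \rangle_\omega e^{-\psi} dV_\omega$ on the right, and then discarding the two manifestly nonnegative pieces that survive after absorption.

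First I would bound the cross term from below. Pointwise, Cauchy-Schwarz together with $2|ab| \le A a^2 + A^{-1} b^2$ gives
\[
2\re \langle \dbar^*_\psi \beta,\, \gradbar\tau \rfloor \beta\rangle_\omega \;\ge\; -A\,|\dbar^*_\psi \beta|^2_\omega \;-\; A^{-1} |\gradbar\tau \rfloor \beta|^2_\omega,
\]
and the identity $|\gradbar \tau \rfloor \beta|^2_\omega e^{-\vp} = \langle \ii(\di\tau \wedge \dbar\tau)\beta,\beta\rangle_\omega e^{-\vp}$ recorded above rewrites the last norm as precisely the Hermitian form that appears in \eqref{tbest}. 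Substituting this lower bound into the right-hand side of \eqref{twisted-bk-id}, transferring $A\int_\Omega |\dbar^*_\psi\beta|^2_\omega e^{-\psi}dV_\omega$ to the left, and absorbing the $-A^{-1}\langle \ii\di\tau\wedge\dbar\tau\,\beta,\beta\rangle$ piece into the bracket $\tau(\di\dbar\psi + {\rm Ricci}(\omega)) - \di\dbar\tau$ produces exactly the bracketed expression of \eqref{tbest}, together with two leftover integrals on the right: the covariant-gradient integral $\int_\Omega \tau |\overline{\nabla} \beta|^2_\omega e^{-\psi} dV_\omega$ and the boundary integral $\int_{\di\Omega} \langle \ii \tau \di\dbar\rho\, \beta,\beta\rangle_\omega e^{-\psi} dS_{\di\Omega}$.

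The final step is to drop both of these integrals. The first is trivially nonnegative. For the second, $\tau > 0$ and, since $\beta$ lies in the domain of $\dbar^*_\psi$, the interior product of $\dbar\rho$ with $\beta$ vanishes on $\di\Omega$; hence only the tangential components of $\beta$ in the holomorphic cotangent directions enter the boundary integrand, and on that subspace the pseudoconvexity hypothesis forces $\ii\di\dbar\rho$ to act semipositively, so the boundary integrand is nonnegative. Dropping both integrals yields \eqref{tbest}.

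I do not anticipate any real obstruction here, since \eqref{twisted-bk-id} has already absorbed the integration-by-parts work. The one point worth flagging is the interplay between the domain of $\dbar^*_\psi$ and the pseudoconvexity assumption: because $\dbar^*_\psi$ and the unweighted $\dbar^*$ differ only by a zero-order operator, the two operators share the same domain, and it is exactly the $\dbar\rho$-interior-product boundary condition coming from that domain that restricts $\beta$ on $\di\Omega$ to the holomorphic-tangential subspace where pseudoconvexity applies.
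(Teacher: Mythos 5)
Your proposal is correct and is essentially the paper's own proof: the paper likewise derives \eqref{tbest} from the twisted identity \eqref{twisted-bk-id} by applying Cauchy--Schwarz and $ab \le Aa^2 + A^{-1}b^2$ to the cross term $2\re \left< \dbar^*_{\psi}\beta, \gradbar \tau \rfloor \beta \right>_{\omega}$, moving the resulting $A\,|\dbar^*_{\psi}\beta|^2_{\omega}$ term to the left, and discarding the nonnegative integrals $\int_{\Omega}\tau|\overline{\nabla}\beta|^2_{\omega}e^{-\psi}dV_{\omega}$ and $\int_{\di\Omega}\left<\ii(\tau\di\dbar\rho)\beta,\beta\right>_{\omega}e^{-\psi}dS_{\di\Omega}$. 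The only difference is one of explicitness: the paper leaves the dropping of the two nonnegative terms implicit, while you correctly spell out why the boundary term is nonnegative (the $\dbar$-Neumann condition confines $\beta$ on $\di\Omega$ to the complex-tangential directions, where pseudoconvexity makes $\ii\di\dbar\rho$ act semipositively).
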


For pseudoconvex domains in $\C^n$, Lemma \ref{tbe} was independently proved in \cite{b-96}, \cite{mc-96} and \cite{s-96}. For forms vanishing on $b\Omega$, the inequality was proved even earlier in \cite{ot-87}.

\subsection{Ellipticity of the twisted $\dbar$ Laplacian}

Let $\tau$ and $A$ be smooth functions with $\tau$ and $\tau +A$ positive.  Setting 
\[
T := \dbar \circ \sqrt{\tau +A} \quad \text{and} \quad S = \sqrt\tau \circ \dbar,
\]
where $T$ acts on $L$-valued $(0,q)$-forms and $S$ acts on $L$-valued $(0,q+1)$-forms, define the twisted $\dbar$-Laplacian 
\[
\Box := TT^*+S^*S,
\]
which is a second order operator mapping smooth $L$-valued $(0,q+1)$-forms to smooth $L$-valued $(0,q+1)$-forms.  Note that if $\tau = 1$ and $A=0$, the standard $\dbar$-Laplacian is recovered, which is well-known to be elliptic (recall we are assuming $e^{-\vp}$ is smooth). Denote the standard $\dbar$-Laplacian by $\Box _{0}$ below.

The twisted basic estimate \eqref{tbest} will be used to invert the operator $\Box$ under certain assumptions on $\tau$ and $A$, but we will also need to know something about the regularity of $\Box$.  The regularity needed follows from elliptic theory and the following proposition.

\begin{prop}\label{box-is-elliptic}
If $\tau$ and $A$ are smooth, and if $\tau$ and $\tau +A$ are positive, then the operator $\Box$ is second order (interior) elliptic with smooth coefficients.
\end{prop}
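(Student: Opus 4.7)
The plan is to reduce the statement to a principal symbol computation. First I would expand
\[
TT^* = \dbar\circ(\tau+A)\circ\dbar^*_\psi, \qquad S^*S = \dbar^*_\psi\circ \tau\circ\dbar,
\]
so that $\Box = \dbar(\tau+A)\dbar^*_\psi + \dbar^*_\psi\, \tau\, \dbar$. Because $\tau$ and $A$ are smooth, multiplication by them is a zeroth-order operator with smooth coefficients, while $\dbar$ and its formal adjoint $\dbar^*_\psi$ (which inherits smoothness from $\omega$ and $e^{-\psi}$) are first-order operators with smooth coefficients. This already makes $\Box$ a second-order differential operator with smooth coefficients, and reduces the problem to checking invertibility of its principal symbol away from the zero section.

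To compute that symbol I would work at a fixed point $x\in X$ and a nonzero real cotangent vector $\xi$, denoting by $\bar\xi$ its $(0,1)$-part. Zeroth-order terms contribute nothing to the principal symbol, and the standard symbols on $(0,\bullet)$-forms are $\sigma(\dbar)(\xi) = \bar\xi\wedge\cdot$ and $\sigma(\dbar^*_\psi)(\xi) = \iota_{\bar\xi}$. Therefore, on $(0,q+1)$-forms,
\[
\sigma_2(\Box)(\xi) = (\tau+A)(x)\,(\bar\xi\wedge)\,\iota_{\bar\xi} \;+\; \tau(x)\,\iota_{\bar\xi}\,(\bar\xi\wedge).
\]

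The main (and essentially the only) step is to verify that this symbol is invertible when $\xi \neq 0$, using only the positivity of $\tau$ and of $\tau+A$ and \emph{no} sign hypothesis on $A$ itself. For this I would decompose any $(0,q+1)$-form $\beta$ orthogonally as $\beta = \bar\xi\wedge\gamma + \eta$, where $\gamma$ and $\eta$ both lie in $\ker \iota_{\bar\xi}$ (i.e., contain no $\bar\xi$ component). From the standard identity $(\bar\xi\wedge)\iota_{\bar\xi} + \iota_{\bar\xi}(\bar\xi\wedge) = |\bar\xi|^2 I$, a short computation gives
\[
(\bar\xi\wedge)\iota_{\bar\xi}\beta = |\bar\xi|^2\, \bar\xi\wedge\gamma, \qquad \iota_{\bar\xi}(\bar\xi\wedge)\beta = |\bar\xi|^2\,\eta,
\]
so that
\[
\sigma_2(\Box)(\xi)\beta = (\tau+A)(x)|\bar\xi|^2\,\bar\xi\wedge\gamma \;+\; \tau(x)|\bar\xi|^2\,\eta.
\]
Since $\tau$ and $\tau+A$ are both positive and $|\bar\xi|^2 \neq 0$ for $\xi\neq 0$, this map is injective, hence invertible, which gives interior ellipticity. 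The only subtle point is precisely this orthogonal splitting: it is what makes the hypothesis $\tau+A>0$ (rather than the stronger $A\ge 0$) sufficient for ellipticity.
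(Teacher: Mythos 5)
Your proof is correct, and while it opens with the same reduction as the paper---identifying the second-order part of $\Box$ as $(\tau+A)\,\dbar\dbar^*_\psi + \tau\,\dbar^*_\psi\dbar$ modulo first-order terms---it finishes by a genuinely different route. The paper never computes a symbol: it observes that the leading part is bounded below by a positive function (in effect $\min(\tau,\tau+A)$) times the standard Laplacian $\Box_0 = \dbar\dbar^* + \dbar^*\dbar$, and then simply quotes the known ellipticity of $\Box_0$. You instead prove invertibility of the principal symbol from scratch via the orthogonal splitting $\beta = \bar\xi\wedge\gamma + \eta$ with $\gamma,\eta\in\ker\iota_{\bar\xi}$, in effect re-deriving the ellipticity of $\Box_0$ along the way. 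What each buys: the paper's comparison argument is shorter and hides the pointwise algebra inside a citation; yours is self-contained and makes visible exactly where the hypotheses enter---the symbol is block-diagonal with respect to the splitting, with blocks scaled by $(\tau+A)|\bar\xi|^2$ and $\tau|\bar\xi|^2$, so positivity of $\tau$ and $\tau+A$ separately (and no sign condition on $A$) is precisely what is needed, a point the paper's domination argument also uses but less transparently. Your final step (injectivity implies invertibility) is justified since the symbol acts on the finite-dimensional fiber $\Lambda^{0,q+1}T^{*}_{X,x}\tensor L_x$; with that remark, the argument is complete.
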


\begin{proof}
A simple calculation shows that 
\[
\sd := TT^*+S^*S - \left ( (\tau+A) \dbar \dbar ^* + \tau \dbar ^* \dbar \right )
\]
is a differential operator (for $L$-valued $(0,q)$-forms) or order $1$.  Since 
\[
(\tau+A) \dbar \dbar ^* + \tau \dbar ^* \dbar
\]
is bounded below by the product of a positive function and the operator 
\[
\Box _{0} := \dbar \dbar ^* +\dbar ^* \dbar,
\]
the proof follows from the ellipticity of $\Box _{0}$.
\end{proof}

In our applications, $\tau$ and $A$ will both be strictly positive.  

\section{The extension problem without estimates}\label{S:3}

\subsection{Two notions of restriction}\label{rest-par}  The following definition was essentially already made in the introduction.

\begin{defn}\label{restriction-defns}
Let $\iota :Z \emb X$ be the natural inclusion of $Z$ in $X$.
\begin{enumerate}
\item[(3.1.a)] Say that an $L|_Z$-valued $(0,q)$-form $\eta$ on $Z$ is the {\it intrinsic restriction} of an $L$-valued $(0,q)$-form $\theta$ if 
\[
\iota ^*\theta = \eta.
\]
\item[(3.1.b)] Say that a section $\xi$ of the vector bundle $\left . \left ( L \tensor \Lambda ^{0,q}_X \right )\right |_Z$ is the {\it ambient restriction} of an $L$-valued $(0,q)$-form $\theta$ on $X$ if  
\[
\theta (z) = \xi(z)
\]
for all $z \in Z$.
\end{enumerate}
\end{defn}

\subsection{The extensions of $\dbar$-closed intrinsic  and ambient restrictions}\label{second-amb-par}

We begin with the statement of the characterization of intrinsic restrictions of $\dbar$-closed forms.  Note that if an $L$-valued $(0,q)$-form $\eta$ on $Z$ is of the form $\eta = \iota ^* \theta$ for some $\dbar$-closed, $L$-valued $(0,q)$-form $\theta$ on $X$, then 
\[
\dbar \eta = \dbar \iota ^*\theta = \iota ^* \dbar \theta = 0.
\]
The next proposition states that on a Stein manifold, the necessity of $\dbar$-closedness is also sufficient.

\begin{prop}\label{int-ext-no-est}
Let $\eta$ be a smooth $\dbar$-closed, $L$-valued $(0,q)$-form on $Z$.  Then there exists a smooth $\dbar$-closed, $L$-valued $(0,q)$-form $\tilde u$ on $X$ whose intrinsic restriction to $Z$ is $\eta$, i.e., such that 
\[
\iota ^*\tilde u = \eta,
\]
where $\iota :Z \emb X$ denotes the natural inclusion.  \end{prop}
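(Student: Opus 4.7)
My plan is to split into the cases $q=0$ and $q \ge 1$, using in both cases that $Z$, being a closed complex submanifold of the Stein manifold $X$, is itself Stein.

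When $q=0$, a $\dbar$-closed $L$-valued $(0,0)$-form is a holomorphic section of $L|_Z$, and I would invoke Cartan's Theorem B: the ideal sheaf sequence
\[
0 \to \ci_Z \tensor \co(L) \to \co(L) \to \iota_* \co(L|_Z) \to 0
\]
is an exact sequence of coherent analytic sheaves on the Stein manifold $X$, so $H^1(X,\ci_Z \tensor \co(L))=0$, and the restriction map on global sections is surjective. A preimage of $\eta$ gives the desired $\tilde u$, which is automatically smooth (in fact holomorphic).

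When $q \ge 1$, the strategy is to produce a primitive of $\eta$ on $Z$, extend it to $X$, and take $\dbar$. Because $Z$ is Stein, Dolbeault's theorem (or equivalently Hörmander's $L^2$ method combined with standard regularization) gives a smooth $L|_Z$-valued $(0,q-1)$-form $u$ on $Z$ with $\dbar_Z u = \eta$. I then want to smoothly extend $u$ to an $L$-valued $(0,q-1)$-form $\tilde w$ on $X$ with $\iota^* \tilde w = u$. Locally near each $p \in Z$ I pick holomorphic coordinates $(z',z_n)$ with $Z=\{z_n=0\}$ and a holomorphic frame for $L$; writing $u = \sum_{J \subseteq \{1,\dots,n-1\}} u_J(z')\, d\bar z^J \tensor e_L$, I extend each coefficient $u_J$ to be constant in $z_n,\bar z_n$, obtaining a local extension $\tilde w_\alpha$ which, having no $d\bar z_n$ component, satisfies $\iota^* \tilde w_\alpha = u$ on the chart. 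Gluing these via a partition of unity $\{\chi_\alpha\}$ subordinate to a cover of a neighborhood of $Z$, and multiplying by a cutoff that is $1$ near $Z$ to extend trivially across $X \setminus Z$, produces a global smooth $\tilde w$; since $\iota^*$ is linear and $\sum \chi_\alpha \equiv 1$ near $Z$, the identity $\iota^* \tilde w = u$ is preserved by the averaging. Setting $\tilde u := \dbar \tilde w$ finishes the proof: $\tilde u$ is smooth, $\dbar \tilde u = 0$ automatically, and
\[
\iota^* \tilde u = \iota^* \dbar \tilde w = \dbar_Z \iota^* \tilde w = \dbar_Z u = \eta,
\]
where the middle equality uses the standard compatibility of pullback with $\dbar$ on smooth forms.

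There is no real obstacle here beyond bookkeeping; the only step that requires any care is the smooth extension of the $(0,q-1)$-primitive $u$ with its intrinsic restriction preserved, and this is entirely formal once coordinates adapted to $Z$ are chosen. I emphasize that this proposition is an \emph{existence} statement only, making no claim about norms, so none of the hypotheses of Theorem \ref{ambient-main} concerning curvature, $f_Z$, or $\lambda_Z$ enter the argument; those will play their role when $L^2$ control is demanded in the subsequent theorems.
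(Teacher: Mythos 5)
Your proof is correct, but it takes a genuinely different route from the paper's. The paper never passes through a primitive on $Z$: it extends $\eta$ locally on ambient charts $U_j = B_j \times D_j$ by the trivial ($\dbar$-closed) coordinate formula, observes that the discrepancies $\eta_i - \eta_j$ vanish to order $1$ along $Z$, and then corrects the partition-of-unity glue by solving a global $\dbar$-equation on $X$ for the $(L-E_Z)$-valued form $H = f_Z^{-1}\dbar \tilde\eta_i$ (via its Lemma \ref{3-weights}); multiplying that solution by $f_Z$ makes the correction vanish on $Z$, so the patched form is $\dbar$-closed and still pulls back to $\eta$. You instead exploit Steinness of $Z$ itself: solve $\dbar u = \eta$ on $Z$, extend $u$ smoothly to $\tilde w$ (a purely formal step, as you say), and set $\tilde u = \dbar \tilde w$, so closedness is automatic and $\iota^*\tilde u = \dbar(\iota^*\tilde w) = \eta$ by commutation of $\iota^*$ with $\dbar$, which is valid for $L$-valued forms because $L$ is holomorphic and $\iota$ is a holomorphic embedding. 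Your argument is shorter and avoids both the division by $f_Z$ and the ambient $\dbar$-problem; if you want a citation internal to the paper, Lemma \ref{3-weights} applied with $X$ replaced by the Stein manifold $Z$ supplies the smooth primitive. What the paper's longer route buys is that it rehearses the architecture of the main theorems --- local extension plus a globally controlled correction vanishing on $Z$ (compare $u_{\ve,j} = \chi_{\ve}\tilde\eta - \sqrt{\tau+A}\, v_{\ve,j}\tensor f_Z$ in the proof of Theorem \ref{ambient-main}) --- which is precisely how the $L^2$ statements are later established. Your construction, by contrast, always produces a $\dbar$-exact extension; that is harmless here, since the proposition asserts bare existence, and it in fact anticipates the trick $\tilde\alpha = \dbar(\rho\eta)$ used in the proof of Theorem \ref{main-i-coh}. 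Finally, your $q=0$ case via Cartan's Theorem B matches the paper's remark that this case is classical, which is why the paper assumes $q\ge 1$ outright.
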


\noi Proposition \ref{int-ext-no-est} will be proved below.

We turn next to the characterization of ambient restriction of $\dbar$-closed forms.  By contrast with the case of intrinsic restriction, the symbol $\dbar \xi$ is meaningless; the bundle $\left . \left ( L \tensor \Lambda ^{0,q}_X \right )\right |_Z \to Z$ does not admit a naturally defined notion of $\dbar$.  However, for a section $\xi$ of the latter bundle, it makes sense to define $\iota ^* \xi$ as an $L|_Z$-valued $(0,q)$-form on $Z$ by the formula
\begin{equation}\label{intrinsic-part-defn}
\iota ^*\xi (v_1,...,v_q) := \xi (d\iota (p) v_1,...,d\iota (p)v_q), \quad v_i \in T^{0,1}_{Z,p}.
\end{equation}
Moreover, if $\xi$ is the ambient restriction of some $\dbar$-closed $L$-valued $(0,q)$-form $\theta$, then 
\[
\dbar \iota ^*\xi = \dbar \iota ^* \theta = \iota ^* \dbar \theta = 0.
\]
Conversely, the following Proposition holds:

\begin{prop}\label{ext-no-estimates}
Let $\xi$ be a smooth section of the vector bundle $\left . (L \tensor \Lambda ^{0,q}_X)\right |_Z \to Z$, such that 
\[
\dbar \iota ^* \xi = 0.
\]
Then there is a smooth $\dbar$-closed $L$-valued $(0,q)$-form $\tilde u$ on $X$ such that $\tilde u |_Z = \xi$.
\end{prop}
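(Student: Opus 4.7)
The plan is to bootstrap Proposition \ref{int-ext-no-est} to the ambient setting by exploiting the Stein property of $X$. First I would apply Proposition \ref{int-ext-no-est} to $\iota^*\xi$ (which is $\dbar$-closed on $Z$ by hypothesis) to obtain a $\dbar$-closed $L$-valued $(0,q)$-form $u_1$ on $X$ with $\iota^*u_1=\iota^*\xi$. Setting $\mu:=\xi-u_1|_Z$, one has $\iota^*\mu=0$, reducing the problem to extending such a ``normal'' $\mu$ to a $\dbar$-closed $u_2$ on $X$; the sum $\tilde u:=u_1+u_2$ then answers the proposition.

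To extend such $\mu$, I would build $\dbar$-closed extensions locally and patch. In a coordinate chart $V$ where $Z\cap V=\{z_n=0\}$, the condition $\iota^*\mu=0$ lets one write $\mu=d\bar z_n\wedge\mu'$, and with $\pi:V\to Z\cap V$ denoting the projection $(z',z_n)\mapsto(z',0)$, a direct computation shows that $\tilde u_V:=d\bar z_n\wedge\pi^*\mu'+\bar z_n\,\dbar(\pi^*\mu')$ is $\dbar$-closed on $V$ and ambiently restricts to $\mu$ on $Z\cap V$ (the cancellation uses that $\pi^*\mu'$ is independent of $z_n$, so $\dbar(\pi^*\mu')$ has no $d\bar z_n$ component, and $\dbar^2=0$). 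Patching with a partition of unity $\{\rho_j\}$ subordinate to a cover $\{V_j\}$ (with a trivial contribution on any chart disjoint from $Z$) produces a smooth ambient extension $\tilde\mu_0:=\sum_j\rho_j\tilde u_{V_j}$ of $\mu$, and the discrepancy $\alpha:=\dbar\tilde\mu_0=\sum_j\dbar\rho_j\wedge\tilde u_{V_j}$ is a $\dbar$-closed $L$-valued $(0,q+1)$-form that vanishes ambiently on $Z$ (each $\tilde u_{V_j}|_Z=\mu$, and $\sum_j\dbar\rho_j=0$). Since $X$ is Stein, Cartan's Theorem B yields $\gamma_0$ with $\dbar\gamma_0=\alpha$; a second application of Proposition \ref{int-ext-no-est} to the $\dbar$-closed form $\iota^*\gamma_0$ produces a $\dbar$-closed $h$ on $X$ with $\iota^*h=\iota^*\gamma_0$, so $\gamma:=\gamma_0-h$ satisfies $\dbar\gamma=\alpha$ and $\iota^*\gamma=0$. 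Then $u_2:=\tilde\mu_0-\gamma$ is $\dbar$-closed with $u_2|_Z=\mu-\gamma|_Z$.

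The main obstacle is the residual correction, in which one must arrange that $\gamma|_Z$, now in $\ker\iota^*$, also vanishes ambiently and not merely intrinsically. Under the smooth splitting $(L\otimes\Lambda^{0,q}_X)|_Z\cong(L\otimes\Lambda^{0,q}(T^*_Z))\oplus(L\otimes\bar N^*_Z\otimes\Lambda^{0,q-1}(T^*_Z))$ induced by the K\"ahler metric, this residual corresponds to a $(0,q-1)$-form on $Z$ valued in the twisted bundle $L\otimes\bar N^*_Z|_Z$, so the correction is an extension problem of the same type but one degree lower. I would close by induction on $q$: the base case $q=0$ is trivial since $\iota^*$ is then an isomorphism and $\ker\iota^*=0$. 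The inductive step requires tracking the antiholomorphic conormal $\bar N^*_Z\cong\bar E_Z^{-1}|_Z$; this is handled by equipping $E_Z$ with a Hermitian metric whose Chern connection produces a globally defined, $\dbar$-closed, $\bar E_Z$-valued $(0,1)$-form on $X$ extending $d\bar f_Z|_Z$, so the same local construction and partition-of-unity argument apply in the twisted setting.
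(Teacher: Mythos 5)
Your first reduction (splitting off the intrinsic part via Proposition \ref{int-ext-no-est} and reducing to a transverse form $\mu$ with $\iota^*\mu=0$) is exactly the paper's first step, and your local formula is exactly the right mechanism --- note that $\tilde u_V=\dbar\bigl(\bar z_n\,\pi^*\mu'\bigr)$, i.e.\ your local extension is $\dbar$-exact. The gap appears when you globalize. Patching destroys closedness, and your repair (Stein solvability plus induction on $q$) cannot be completed as described: after the Stein correction you are left with the transverse residual $\gamma|_Z=d\bar f_Z\wedge g$, and to handle it your inductive step needs a $\dbar$-\emph{closed} extension $\tilde g$ of the twisted $(0,q-1)$-form $g$, since wedging with a global closed extension $\Omega_1$ of $d\bar f_Z$ gives $\dbar(\Omega_1\wedge\tilde g)=-\Omega_1\wedge\dbar\tilde g$, which is nonzero unless $\dbar \tilde g$ is suitably controlled. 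But invoking the proposition one degree lower requires its hypothesis $\dbar\iota^*g=0$, and this is simply not available: the only information you have is $\dbar\gamma=\alpha$, $\iota^*\gamma=0$, and the ambient vanishing of $\alpha$ on $Z$, and in local coordinates where $Z=\{z_n=0\}$ these conditions force $\dbar\iota^*g$ to equal the restriction to $Z$ of a \emph{normal} derivative of $\gamma$, which is unconstrained. The same obstruction recurs in any phrasing of the induction, because extending the intrinsic part of $g$ closedly is exactly the step that needs $\dbar\iota^*g=0$. So the inductive step does not close.

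The fix is that no induction, no patching, and no Stein theory are needed: your local computation globalizes in one stroke, and this is the paper's proof. Since $\iota^*\mu=0$ and $f_Z$ generates the ideal of $Z$, one has globally $\mu=d\bar f_Z\wedge g$, where by adjunction ($df_Z$ trivializes $N_Z^*\tensor E_Z|_Z$) the form $g$ is a section of $\bigl(L\tensor\overline{E_Z^*}\tensor\Lambda^{0,q-1}_X\bigr)\big|_Z$. Take \emph{any} smooth extension $\tilde g$ of $g$ to $X$ --- it need not be $\dbar$-closed, and $\dbar\tilde g$ is not even defined since $L\tensor\overline{E_Z^*}$ is not holomorphic --- and set $u_2:=\dbar\bigl(\overline{f_Z}\tensor\tilde g\bigr)$, which makes sense because $\overline{f_Z}\tensor\tilde g$ is an honest $L$-valued $(0,q-1)$-form. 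Then $u_2$ is exact, hence closed, and since $\overline{f_Z}|_Z\equiv 0$ the term carrying derivatives of $\tilde g$ dies on ambient restriction, giving $u_2|_Z=d\bar f_Z\wedge g=\mu$. This is your formula with $\bar z_n$ replaced by $\overline{f_Z}$; the point you missed is that multiplying by a global section vanishing on $Z$ makes the closedness of the extension of $g$ irrelevant, so the strong (and unverifiable) hypothesis your induction requires never has to be imposed.
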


There are several ways to prove Propositions \ref{int-ext-no-est} and \ref{ext-no-estimates}.  The proofs given below are modeled on
 the method used in the proofs of the main theorems, i.e., when we provide extensions with $L^2$ estimates.

\begin{rmk}
Note that if $q=0$, Propositions \ref{int-ext-no-est} and  \ref{ext-no-estimates} become identical, and say simply that any holomorphic function from a hypersurface in a Stein manifold has a holomorphic extension.  Thus the result is well-known for $q=0$, and in fact {\it hypersurface} can be replaced by {\it closed submanifold}, or even more generally by {\it Stein subvariety} (in which case we must define holomorphic functions as those functions that are holomorphic in a neighborhood).  Therefore we assume from here on that $q \ge 1$.
\red
\end{rmk}

\begin{lem}\label{3-weights}
Let $X$ be a Stein manifold with K\"ahler form and $H \to X$ a holomorphic line bundle.  Let $h$ be a locally bounded, $\dbar$-closed (in the sense of currents), $H$-valued $(0,r+1)$-form.  Then there exists a locally integrable $H$-valued $(0,r)$-form $u$ such that $\dbar u = h$.  Moreover, if $h$ is smooth on an open set $U \subset X$ then one can choose $u$ to be smooth on $U$ as well.
\end{lem}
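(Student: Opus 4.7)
The plan is a two-part argument: $L^2$-existence via H\"ormander on a Stein exhaustion of $X$, followed by an interior regularity upgrade on $U$ coming from ellipticity of the $\dbar$-Laplacian (Proposition \ref{box-is-elliptic}).

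First, fix a smooth strictly plurisubharmonic exhaustion $\lambda : X \to \R$ and write $\varphi_0$ for the given smooth weight on $H$. Choose a convex, sufficiently rapidly increasing function $\chi : \R \to \R$ and set $\varphi := \varphi_0 + \chi \circ \lambda$; convexity of $\chi$ ensures that $\ii \di \dbar \varphi$ is as positive as we wish, and rapid growth at infinity guarantees $\int_X |h|_\omega^2 e^{-\varphi}\, dV_\omega < +\infty$ (using only local boundedness of $h$). Exhaust $X$ by smoothly bounded strictly pseudoconvex Stein subdomains $X_1 \relcomp X_2 \relcomp \cdots$. On each $X_j$, the twisted basic estimate (Lemma \ref{tbe} applied with $\tau \equiv 1$, $A \equiv 0$) produces a Kohn solution $u_j$ of $\dbar u_j = h$ in $X_j$ of minimal $L^2(X_j, e^{-\varphi})$-norm; by minimality $\dbar^*_{\varphi} u_j = 0$ in the interior of $X_j$, and the basic estimate yields a uniform bound $\int_{X_j} |u_j|^2 e^{-\varphi}\, dV_\omega \le C \int_X |h|^2 e^{-\varphi}\, dV_\omega$.

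Second, for this Kohn solution one computes $\Box_0 u_j = \dbar^*_{\varphi} \dbar u_j + \dbar \dbar^*_{\varphi} u_j = \dbar^*_{\varphi} h$ on $X_j$, where $\Box_0 := \dbar \dbar^*_{\varphi} + \dbar^*_{\varphi} \dbar$ is a second-order interior elliptic operator with smooth coefficients (the $\tau \equiv 1$, $A \equiv 0$ specialization of Proposition \ref{box-is-elliptic}). Since $h$ is smooth on $U$, so is $\dbar^*_{\varphi} h$, and interior elliptic regularity applied to $\Box_0 u_j = \dbar^*_{\varphi} h$ on $U \cap X_j$ forces $u_j$ to be smooth on $U \cap X_j$, with $C^k$ estimates on any fixed compact subset controlled by $\|u_j\|_{L^2(X_j, e^{-\varphi})}$ and finitely many derivatives of $h$; these estimates are uniform in $j$.

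Finally, a diagonal weak $L^2_{\mathrm{loc}}$ extraction produces a global solution: for each fixed $k$, $\{u_j|_{X_k}\}_{j \ge k}$ is uniformly bounded in $L^2(X_k, e^{-\varphi})$, so a weakly convergent subsequence exists, and diagonalizing yields $u \in L^2_{\mathrm{loc}}(X)$ with $\dbar u = h$ in the sense of currents, hence $u$ is locally integrable. On compact subsets of $U$ the uniform interior $C^k$ bounds upgrade this weak $L^2$ convergence to $C^{k-1}$-strong convergence by Arzel\`a--Ascoli, and since $k$ is arbitrary the limit $u$ is smooth on all of $U$. The main obstacle is ensuring that the interior elliptic estimates on $U \cap X_j$ are truly uniform in $j$, independent of the differing Neumann boundary conditions prevailing at $\partial X_j$; the crucial point is that the ellipticity of $\Box_0$ is a purely interior property, so standard interior Schauder estimates apply on any fixed compact subset of $U$ as soon as $j$ is large enough that this subset lies well inside $X_j$, and this uniformity is precisely what allows smoothness to survive the passage to the weak limit.
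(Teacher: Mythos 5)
Your proof is correct, and it reaches the paper's conclusion by a genuinely different regularity mechanism, so a comparison is worthwhile. The paper never solves $\dbar u = h$ directly: it solves the potential equation $\Box_0 g_c = h$ on sublevel sets $\Omega_c$ of a rapidly growing strictly plurisubharmonic exhaustion (in the weight $e^{-(\vp+\rho)}$), uses $\dbar h = 0$ to deduce $\dbar g_c = 0$, passes to a single Alaoglu limit $g$, and only then sets $u := \dbar^* g$. Regularity is therefore run on the potential: $g$ satisfies an elliptic equation whose right-hand side is $h$ itself, so local boundedness of $h$ already gives $g \in H^2_{\mathrm{loc}}$, hence $u = \dbar^*g \in H^1_{\mathrm{loc}} \subset L^1_{\mathrm{loc}}$, and smoothness of $h$ on $U$ propagates to $g$ and then to $u$ --- no derivative of the merely locally bounded $h$ is ever taken. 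Your route instead takes the minimal (Kohn) solution $u_j$ of $\dbar u_j = h$ on each $X_j$ and runs interior elliptic regularity on $u_j$ itself via $\Box_0 u_j = \dbar^*_{\vp} h$; this is legitimate, but note that away from $U$ the right-hand side $\dbar^*_{\vp}h$ is only a current of order one, so your equation must be interpreted distributionally and yields $u_j \in H^1_{\mathrm{loc}}$ rather than the $H^2_{\mathrm{loc}}$ the paper gets for its potential --- still more than enough for local integrability --- while on $U$ the right-hand side is smooth and your bootstrap goes through verbatim (the identity $\dbar^*_{\vp}u_j = 0$ in the interior does follow from minimality, since the minimal solution is orthogonal to $\ker\dbar \supset \mathrm{Im}\,\dbar$, and testing against compactly supported forms gives the distributional equation). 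Your key uniformity point --- that interior elliptic estimates for $\Box_0$ on a fixed compact subset of $U$ are independent of $j$ because they never see the $\dbar$-Neumann conditions at $\di X_j$ --- is exactly the observation the paper makes parenthetically, and your diagonal extraction plus Arzel\`a--Ascoli upgrade is a correct substitute for the paper's single weak limit of potentials. In short: the paper's formulation keeps every elliptic right-hand side at the level of $h$, which is cleanest under the weak hypotheses on $h$; yours avoids introducing the potential $g$ and verifying $\dbar g = 0$, at the cost of differentiating $h$ in the sense of currents.
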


\begin{rmk}
For the case $r=0$, the assertion follows from the elliptic regularity of $\dbar$, and thus in particular {\it every} solution of $\dbar u = h$ is smooth when this is the case for $h$.

However in higher bi-degree, $\dbar$ is not elliptic and so the deduction of smoothness is not automatic.  There is nevertheless a standard approach to the problem:  the {\it particular} solution to $\dbar u = h$ of minimal $L^2$-norm is (interior) elliptic.
\end{rmk}

\begin{proof}[Proof of Lemma \ref{3-weights}]
Fix a smooth metric $e^{-\vp}$ for $H$.  Let $\rho :X \to \R$ be a strictly plurisubharmonic exhaustion function such that $\di \dbar \rho$ grows sufficiently rapidly.  Let $\Omega_c := \{ \rho < c\}$ be a strictly pseudoconvex sublevel set of $\rho$, where $c>>0$.   We will work in the Hilbert space closures $L^2_{(0,s)} (dV_{\omega}, \vp +\rho)$ of the set of smooth $H$-valued $(0,s)$-forms on $\Omega _c$, $s=r-1, r,r+1$, with respect to the inner product  
\begin{equation}\label{inner-prods-rho}
\left ( f, g\right ) := \int _{\Omega _c} \left < f, g \right >_{\omega} e^{-(\vp+\rho)} dV_{\omega}.
\end{equation}
Denote by $\dbar_r$ and $\dbar_{r+1}$ the Hilbert space extension of the $\dbar$ operator acting on $L$-valued $(0,r)$-forms and $(0,r+1)$-forms on $\Omega _c$, respectively, and by $\dbar_r^*$ and $\dbar_{r+1}^*$ the formal adjoint of $\dbar_r$ and $\dbar_{r+1}$ with respect to the relevant inner product \eqref{inner-prods-rho}.  The weight $ e^{-(\vp + \rho)}$ is smooth, so by standard facts the smooth forms satisfying the $\dbar$-Neumann boundary condition are dense in the domain of the Hilbert space adjoint of $\dbar$, in the so-called graph norm.

Since $\di \dbar \rho > 0$, the Bochner-Kodaira identity \eqref{bk-id-general} implies that for any smooth $L$-valued $(0,r+1)$-form $f$ in the domain of $\dbar_r^*$
\begin{eqnarray*}
&& \int _{\Omega_c} |\dbar_r^*f|^2_{\omega} e^{-(\vp +\rho )} dV_{\omega} + \int _{\Omega} |\dbar_{r+1}f|^2_{\omega} e^{-(\vp +\rho)} dV_{\omega}  \\
&& \qquad \ge \int _{\Omega_c} \left < \ii(\di \dbar (\vp +\rho) +{\rm Ricci}(\omega))f, f\right >_{\omega}e^{-(\vp +\rho)} dV_{\omega} .
\end{eqnarray*}
Note that by taking $\di \dbar \rho$ sufficiently positive, we can ensure that 
\[
\left <  \ii (\di \dbar (\vp +\rho) +{\rm Ricci}(\omega)) f,f\right > _{\omega}\ge |f|^2_{\omega}.
\]
From this it follows that
\[
||\dbar_r^*f||^2 + ||\dbar_{r+1}f||^2 \ge ||f||^2,
\]
where the norms are given by \eqref{inner-prods-rho}.

Defining the operator
\[
\Box_{0} = \dbar_r\dbar_r^*+\dbar_{r+1}^*\dbar_{r+1},
\]
 the usual functional analysis argument gives a solution to the equation 
\[
\Box_{0} g_c = h \qquad\text{on }\Omega_c
\]
such that 
\begin{equation}\label{omega-c-est}
\int _{\Omega _c} \left|g_c\right|^2_{\omega} e^{-(\vp + \rho)} dV_{\omega} \le \int _{\Omega _c} |h|^2_{\omega} e^{-(\vp +\rho)} dV_{\omega}.
\end{equation}
Moreover, since $\dbar_{r+1} h=0$ by hypothesis, we have 
\[
0= \left(\dbar_{r+1}h, \dbar_{r+1}g_c\right) = \left(\dbar_{r+1}\dbar_r\dbar_r^*g_c+\dbar_{r+1}\dbar_{r+1}^*\dbar_{r+1}g_c, \dbar_{r+1}g_c\right)=\left\|\dbar_{r+1}^*\dbar_{r+1}g_c\right\|^2 
\]
and, hence, 
$$ 0=(\dbar_{r+1}^*\dbar_{r+1}g_c,g_c) = ||\dbar_{r+1}g_c||^2.$$
Therefore, setting $u_c = T^*g_c$, it follows that $Tu_c = h$.

Note that the estimate \eqref{omega-c-est} is uniform with respect to $c$.   We may therefore let $c \to \infty$ and extract via Alaoglou's Theorem a weak  solution to the equations 
\[
\Box_{0} g = h, \quad Sg = 0\qquad\text{on }\Omega.
\]

Turning to regularity, it is well-known that the operator $\Box_{0}$ is second-order (interior) elliptic.  Therefore $g$ lies in $H^2_{\ell oc}(X)$, the Sobolev space of measurable functions all of whose derivatives of order at most $2$ are $L^2_{\ell oc}$.  Moreover if $h$ is smooth then so is $g$. (Here we have no boundary conditions, we are working only with interior ellipticity and avoiding the significant complications of the $\dbar$-Neumann problem.)   

Finally, let $u := \dbar ^* g$.  The form $u$ is well-defined in the weak sense because $g$ is in $H^2_{\ell oc}(X)$.  Moreover, by construction, $\dbar u = \Box g = h$.  The proof is complete.
\end{proof}

\begin{proof}[Proof of Proposition \ref{int-ext-no-est}]
Locally the problem is trivial.  To see this, first choose local coordinates $(z_j,f_Z)$ where for each $j$, $z_j$ are coordinates in a unit ball $B_j \subset Z$, such that $\{ B_j\}_{j\ge 1}$ is a locally finite cover of $Z$. Then cover a neighborhood of $Z$ in $X$ by sets of the form $B_j \times D_j$ where $D_j$ is biholomorphic to the unit disk.  On each set $U_j := B_j \times D_j$, we may write 
\[
\eta _j := \eta |_{B_j} = \sum _{\# \alpha = q} f^j_{\bar \alpha} (z_j) d\bar z_j^{\alpha}.
\]
This form is well-defined on $U_j$ and is $\dbar$-closed, so gives the local $\dbar$-closed extension.  

These local extensions are now patched together to obtain a global $\dbar$-closed extension.  Observe that 
\[
\eta _i - \eta _j \equiv 0 \quad \text{to order $1$ on }Z\cap (U_i \cap U_j).
\]
Define 
\[
U_0 = X- \bigcup _{j \ge 1} (1-\ve_j)(\overline{B_j \times D_j}) \quad \text{and} \quad \eta _0 \equiv 0 \text{ on }U_0,
\]
where the numbers $\ve _j$ are chosen so small that $\{ (1-\ve _j)B_j\} _{j \ge 1}$ still form a cover of $Z$.

Fix a partition of unity $\{ \chi _j\}$ subordinate to $\{ U_j\}$ and write 
\[
\tilde \eta _j := \sum _k \chi _k (\eta _j - \eta _k) \quad \text{and} \quad  h _j := \sum _k \chi _k \frac{(\eta _j - \eta _k)}{f_Z}.
\]
Then 
\[
\tilde \eta _i - \tilde \eta _j = \sum _k \chi _k (\eta _i - \eta _k + \eta _k - \eta _j ) = (\eta _i - \eta _j) \sum _k \chi _k =\eta _i-\eta _j,
\]
and similarly 
\[
h_i - h_j = \frac{\eta _i-\eta _j}{f_Z}.
\]
Observe that all the $h_i$ are well-defined because $\eta _i - \eta _j$ vanishes identically to order $1$ on $Z\cap(U_i \cap U_j)$ for all $i$ and $j$.  Now,  
\[
\dbar \tilde \eta _i - \dbar \tilde \eta _j = \dbar \eta _i - \dbar \eta _j = 0-0=0
\]
and similarly 
\[
\dbar h _i - \dbar h_j = 0
\]
on $U_i \cap U_j$.  It follows that 
\[
\Theta := \dbar \tilde \eta _i \quad \text{and} \quad H := f_Z ^{-1}\dbar \tilde \eta _i \qquad \text{ on }U_i
\]
are globally well-defined, smooth $(0,q+1)$-forms with values in $L$ and $L-E_Z$ respectively.  By Lemma \ref{3-weights} there exists an $(L-E_Z)$-valued smooth $(0,q)$-form $v$ such that $\dbar v = H$.  Evidently the smooth $L$-valued $(0,q)$-form $u := f_Z v$ satisfies 
\[
\dbar u = f_Z H = \Theta
\]
and moreover $u$ vanishes at each point of $Z$.  Finally, define 
\[
\hat \eta _i := \tilde \eta _i - u.
\]
Then 
\[
\hat \eta _i - \hat \eta _j = \tilde \eta _i - \tilde \eta _j = \eta _i -\eta _j
\]
so that 
\[
\tilde \eta := \eta _i - \hat \eta _i \quad \text{on }U_i
\]
is a globally defined smooth $L$-valued $(0,q)$-form satisfying $\iota ^* \tilde \eta = \eta$ and $\dbar \tilde \eta = - \dbar \hat \eta _i =  \dbar u - \Theta = 0$ on any $U_i$.  The proof is thus finished.
\end{proof}

\begin{proof}[Proof of Proposition \ref{ext-no-estimates}]

Let $\xi$ be the section to be extended.  By hypothesis, $\dbar \iota ^* \xi = 0$.  Therefore Proposition \ref{int-ext-no-est} implies the existence of an $L$-valued $(0,q)$-form $\tilde \eta$ on $X$ such that $\dbar \tilde \eta = 0$ and $\iota ^*\tilde \eta = \iota ^* \xi$.  Define 
\[
\delta = \xi - \tilde \eta |_Z.
\]
Then $\iota ^* \delta = 0$, which means that 
\[
\delta = d\bar f_Z \wedge g
\]
for some kind of object $g$ that locally looks like an $L$-valued $(0,q-1)$-form on $Z$.  To see the global nature of $g$ (along $Z$), recall that, by the adjunction formula, $df_Z$, which is only well-defined on $Z$, can be thought of as a section of $N_Z^* \tensor (E_Z)|_Z$ where $N_Z^*$ is the co-normal bundle of $Z$.  Since $Z$ is smooth, $df_Z$ is nowhere zero on $Z$, so the latter line bundle is trivial.  Therefore $(E_Z^*)|_Z$ agrees with the conormal bundle of $Z$.  It follows that $g$ is an $L\tensor \overline{E_Z^*}$-valued $(0,q-1)$-form on $Z$.  Take any smooth extension of $g$ to an $L\tensor \overline{E_Z^*}$-valued $(0,q-1)$-form $\tilde g$ on $X$.  The form 
\[
\overline{f_Z} \tensor \tilde g
\]
is thus a globally defined $L$-valued $(0,q)$-form on $X$.  It follows that
\[
\tilde \delta := \dbar ( \overline{f_Z} \tensor \tilde g).
\]
is well-defined.

An easy calculation shows that, since $f_Z |_Z\equiv 0$,  
\[
\tilde \delta |_Z = \dbar (\overline{f_Z} \tensor \tilde g)|_Z = d\bar f_Z \wedge g.
\]
It follows that $\theta := \tilde \eta + \tilde \delta$ satisfies 
\[
\theta |_Z = \tilde \eta |_Z + \delta = \xi,
\]
and this is what was needed.  
\end{proof}

\section{Proof of Theorem \ref{ambient-main}}

Choose smoothly bounded pseudoconvex domains $\Omega _j$, $j =1,2,...$, such that 
\[
\Omega _j \relcomp \Omega _{j+1} \quad \text{and} \quad \lim _{j \to \infty} \Omega_j = \bigcup _{j \ge 1} \Omega _j = X.
\]

\subsection{An a priori estimate}

The first task is to obtain from the Twisted Basic Estimate \eqref{tbe} a suitable a priori estimate.  Begin by setting 
\[
e^{-\psi} := e^{-\vp+\lambda _Z}.
\]
Next we turn to the choices of the functions $A$ and $\tau$.   The choices made are similar to those made in \cite{dv-tak}, which in turn was based on the methods developed by us in \cite{mv-gain}.  Let 
\[
h(x):=2-x+\log (2e^{x-1}-1), \quad v := \log (|f_Z|^2e^{-\lambda _Z}) \quad \text{and} \quad a := \gamma - \delta \log (e^v +\ve ^2),
\] 
where $\delta > 0$ is as in the statement of the main theorems, $x \ge 1$, and $\gamma >1$ is a real number such that $a > 1$.  Define
\[
\tau := a+ h(a) \quad \text{and} \quad A := \frac{1+h'(a)^2}{-h''(a)}.
\]
As noted in \cite{dv-tak}, 
\begin{equation}\label{h'}
h'(x) = (2e^{x-1}-1)^{-1} \in (0,1) \quad \text{and} \quad h''(x) = \frac{-2e^{x-1}}{(2e^{x-1}-1)^2} < 0,
\end{equation}
and therefore 
\begin{equation}\label{tau-h'}
A = 2e^{a-1} \quad \text{and} \quad \tau \ge 1+h'(a).
\end{equation}
Moreover, the choices made guarantee that $-\di \dbar \tau - A^{-1} \di \tau \wedge \dbar \tau = (1+h'(a))(-\di \dbar a)$.  Finally, a straightforward computation yields
\begin{eqnarray*}
- \di \dbar a &=& \delta \di \dbar \log (e^v + \ve ^2) \\
&=& \frac{\delta e^v}{e^v + \ve ^2} \di \dbar v + \frac{4 \ve ^2  \di ( e^{v/2}) \wedge \dbar (e^{v/2})}{\mu ((e^{v/2})^2 + \ve ^2)^2}  \\
&=& -\delta \frac{e^v}{e^v + \ve ^2}  \di \dbar \lambda _Z +  \frac{4\delta \ve ^2 \di ( e^{v/2}) \wedge \dbar (e^{v/2})}{((e^{v/2})^2 + \ve ^2)^2}.
\end{eqnarray*}
In the last equality, the fact that 
\[
\ii \di \dbar v = \pi [Z]- \ii \di \dbar \lambda _Z, 
\]
where $[Z]$ is the current of integration over $Z$ is used.  Since $e^v |_Z \equiv 0$, the term involving the current of integration vanishes.

A direct computation now yields
\begin{eqnarray*}
&& \ii \left ( \tau (\di \dbar \psi + {\rm Ricci}(\omega)) - \di \dbar \tau - A^{-1} \di \tau \wedge \dbar \tau \right ) \wedge \omega ^{q} \\
&=& \ii \left ( \tau (\di \dbar (\vp-\lambda _Z) + {\rm Ricci}(\omega)) + (1+h'(a))(-\di \dbar a)\right )\wedge \omega ^{q} \\
&=& \left ( \tau -  (1+h'(a))\left (\frac{e^v}{e^v + \ve ^2}\right )\right ) \ii (\di \dbar (\vp - \lambda _Z) + {\rm Ricci}(\omega)) \wedge \omega ^{q}\\
&& + \ii (1+h'(a))\frac{e^v}{e^v + \ve ^2}\left ( (\di \dbar (\vp-\lambda _Z) + {\rm Ricci}(\omega)) - \delta \di \dbar \lambda _Z \right )\wedge \omega ^{q}\\
&& \qquad  +\ii (1+h'(a)) \left ( \frac{4\delta \ve ^2 \di(e^{v/2}\wedge \dbar (e^{v/2})}{((e^{v/2})^2 + \ve ^2)^2}\right )\wedge \omega ^{q}\\
&\ge&  \delta \left ( \frac{4 \ve ^2 \ii \di(e^{v/2}\wedge \dbar (e^{v/2})}{((e^{v/2})^2 + \ve ^2)^2}\right )\wedge \omega ^{q},
\end{eqnarray*}
where the last inequality, which is in the sense of Hermitian $(q+1,q+1)$-forms, follows from the hypotheses of Theorems \ref{intrinsic-main} and \ref{ambient-main}, as well as the properties \eqref{h'} and \eqref{tau-h'}.  Consequently, the following lemma has been proved:

\begin{lem}\label{a-priori}
Let $T:= \dbar \circ \sqrt{\tau +A}$ and $S= \sqrt{\tau} \dbar$.  Then under the hypotheses of Theorems \ref{ambient-main} and \ref{intrinsic-main} one has the estimate 
\[
||T^*\beta||^2 +||S\beta||^2 \ge \delta \int _{\Omega} 4 \frac{\ve ^2}{((e^{v/2})^2 +\ve ^2)^2} \left \langle \ii \{\di (e^{v/2})^2 \wedge \dbar (e^{v/2})^2\} \beta , \beta \right \rangle _{\omega} e^{-\psi} dV_{\omega}
\]
for any $L$-valued $(0,q+1)$-form $\beta$ in the domain of the adjoint $T^*$.
\end{lem}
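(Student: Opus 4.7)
The plan is to package the pointwise curvature computation that immediately precedes the lemma together with the Twisted Basic Estimate of Lemma \ref{tbe}. There is no real new content to discover; the proof is essentially an assembly step.

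First I would identify the two terms on the left hand side with those in \eqref{tbest}. Since $T=\dbar\circ\sqrt{\tau+A}$ acts on $L$-valued $(0,q)$-forms, its adjoint (with respect to the $e^{-\psi}$-weighted inner product) is $T^*\beta=\sqrt{\tau+A}\,\dbar^*_\psi\beta$, so $\|T^*\beta\|^2=\int_\Omega(\tau+A)|\dbar^*_\psi\beta|^2_\omega e^{-\psi}dV_\omega$. Likewise $S=\sqrt{\tau}\circ\dbar$ gives $\|S\beta\|^2=\int_\Omega\tau|\dbar\beta|^2_\omega e^{-\psi}dV_\omega$. In particular, the domain of $T^*$ coincides with the domain of $\dbar^*_\psi$ (multiplication by the smooth positive function $\sqrt{\tau+A}$ is a bounded invertible map), so applying \eqref{tbest} on $\Omega$ to $\beta$ is legitimate.

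The Twisted Basic Estimate then yields
\[
\|T^*\beta\|^2+\|S\beta\|^2\ge \int_\Omega\bigl\langle\ii\{\tau(\di\dbar\psi+{\rm Ricci}(\omega))-\di\dbar\tau-A^{-1}\di\tau\wedge\dbar\tau\}\beta,\beta\bigr\rangle_\omega e^{-\psi}dV_\omega.
\]
For the right hand side, I would invoke the pointwise inequality of Hermitian $(q+1,q+1)$-forms already established in the paragraphs leading up to the lemma, namely
\[
\ii\bigl(\tau(\di\dbar\psi+{\rm Ricci}(\omega))-\di\dbar\tau-A^{-1}\di\tau\wedge\dbar\tau\bigr)\wedge\omega^q\;\ge\;\delta\,\frac{4\ve^2\,\ii\,\di(e^{v/2})\wedge\dbar(e^{v/2})}{((e^{v/2})^2+\ve^2)^2}\wedge\omega^q.
\]
The derivation of this inequality is where all the real work lies: it uses the two curvature hypotheses of Theorem \ref{ambient-main}, the identity $\ii\di\dbar v=\pi[Z]-\ii\di\dbar\lambda_Z$ (the current $[Z]$ contributing nothing because the factor $e^v$ vanishes on $Z$), and the elementary bounds \eqref{h'}--\eqref{tau-h'} for $h$. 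That derivation is complete; no further manipulation of the Hessians of $\tau$ or $a$ is needed.

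Finally, I would pass from the pointwise inequality of $(q+1,q+1)$-forms to the corresponding inequality for the induced Hermitian forms acting on $L$-valued $(0,q+1)$-forms, using the fact recorded earlier in the paper that a Hermitian $(1,1)$-form $\Theta$ satisfies $\langle\Theta\beta,\beta\rangle_\omega\ge0$ for every $(0,q+1)$-form $\beta$ if and only if $\Theta\wedge\omega^q$ is a nonnegative $(q+1,q+1)$-form. Integrating the resulting pointwise bound against $e^{-\psi}dV_\omega$ and combining with the previous display gives precisely the stated estimate. The only step that demands any vigilance is the bookkeeping of the $\delta$ factor and of the orthonormal-frame argument that converts $(q+1,q+1)$-form positivity into operator positivity on $(0,q+1)$-forms; everything else is algebraic rearrangement.
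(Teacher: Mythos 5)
Your proposal is correct and essentially identical to the paper's own argument: the paper proves Lemma \ref{a-priori} precisely by combining the curvature computation displayed immediately before the lemma with the Twisted Basic Estimate \eqref{tbest}, via the identifications $\|T^*\beta\|^2=\int_\Omega(\tau+A)|\dbar^*_\psi\beta|^2_\omega e^{-\psi}\,dV_\omega$ and $\|S\beta\|^2=\int_\Omega\tau|\dbar\beta|^2_\omega e^{-\psi}\,dV_\omega$, together with the equivalence (recorded in Section 2) between nonnegativity of $\Theta\wedge\omega^{q}$ as a $(q+1,q+1)$-form and nonnegativity of $\left\langle\Theta\beta,\beta\right\rangle_\omega$ on $(0,q+1)$-forms. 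Your deferral of the pointwise curvature inequality to the preceding paragraphs is exactly what the paper itself does when it says ``consequently, the following lemma has been proved,'' so nothing is missing from your assembly.
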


\subsection{A solution of the twisted $\dbar$-Laplace equation with good estimates}

Let $\xi$ be a smooth section of the vector bundle $(L\tensor \Lambda ^q(T^{*0,1}_X))|_Z \to Z$ satisfying 
\[
\dbar \iota ^*\xi = 0 \quad \text{and} \quad \int _Z\frac{|\xi |^2_{\omega}e^{-\vp}}{|df_Z|^2e^{-\lambda_Z} }\omega ^{n-1} < +\infty.
\]
By Proposition \ref{ext-no-estimates} there is a smooth $\dbar$-closed $L$-valued $(0,q)$-form $\tilde \eta$ on $X$ such that $\tilde \eta |_Z= \xi$.  Since $\tilde \eta$ is smooth and $\Omega \relcomp X$, 
\[
\int _{\Omega} |\tilde \eta|^2_{\omega} e^{-\vp} dV_{\omega} < +\infty.
\]
Let $\nu > 0$ be a real number which we will eventually let tend to $0$.  Let $\chi \in \sC ^{\infty}_o([0,1])$ be a cutoff function with values in $[0,1]$ such that $\chi |_{[0,\nu]} \equiv 1$ and $|\chi '| \le 1+\nu$.  For $\ve >0$, set $\chi _{\ve} := \chi (\ve ^{-2} e^v)$ and define 
\[
\alpha _{\ve} := f_Z^{-1} \dbar \chi _{\ve} \tilde \eta,
\]
which is a $\dbar$-closed, $(L-E_Z)$-valued $(0,q+1)$-form on $X$. The goal is to solve the equation 
\[
(TT^*+S^*S) W_{\ve} = \alpha _{\ve} 
\]
with good $L^2$-estimates with respect to the weight $e^{-\vp +\lambda _Z}$, and good $\sC ^{\infty}_{\ell oc}$-estimates.  Toward this end, observe that for any smooth $L$-valued $(0,q+1)$-form $\beta$ in the domain of $T^*$ one has the estimate
\begin{eqnarray}\label{twist-box-est}
\nonumber |(\beta , \alpha _{\ve})|^2 &:=& \left | \int _{\Omega} \left < \beta , \alpha _{\ve}\right > e^{-\vp+\lambda _Z}dV_{\omega} \right |^2 \\
\nonumber &\le & \left ( \int _{\Omega}|\left < \beta , \alpha _{\ve}\right >_{\omega}|e^{-\vp+\lambda _Z} dV_{\omega} \right )^2\\
&=& \left ( \int _{\Omega} \left | \left < \beta , 2\ve ^{-2} \chi ' (\ve^{-2} e^v) \tilde \eta \wedge (|f_Z|^{-1}e^{v/2}) \dbar (e^{v/2}) \right >_{\omega} \right | e^{-\vp+\lambda _Z}dV_{\omega} \right )^2 \\
\nonumber &\le& \frac{1}{\delta} \left ( \int _{\Omega} \left | \ve ^{-2} \tilde \eta\chi '(\ve ^{-2}e^v) \right |^2_{\omega} \frac{(e^v+\ve ^2)^2}{\ve ^2} e^{-\vp} dV_{\omega} \right ) \int _{\Omega} \left | \left (\gradbar e^{v/2} \right )\rfloor \beta \right |^2_{\omega} \frac{\delta \ve ^2}{(e^v + \ve ^2)^2} e^{-\vp} dV_{\omega}\\
\nonumber  &\le & \frac{C_{\ve}}{\delta} \left (||T^*\beta||^2+||S\beta||^2 \right ) ,
\end{eqnarray}
where  
\[
C_{\ve} := \frac{4(1+\nu)^2}{\delta} \frac{1}{\ve ^2} \int _{\{e^v \le \ve ^2\}} |\tilde \eta|^2_{\omega} e^{-\vp} dV_{\omega} \  {\buildrel \ve \to 0 \over\longrightarrow} \ \frac{8\pi(1+\nu)^2}{\delta} \int _{\Omega \cap Z} \frac{|\xi|^2_{\omega}e^{-\vp}}{|df_Z|^2_{\omega} e^{-\lambda _Z}} \frac{\omega ^{n-1}}{(n-1)!}.
\] 

In analogy with in the proof of Lemma \ref{3-weights}, write $\Box := TT^*+S^*S$.  We solve the equation
\[
\Box V_{\ve} = \alpha _{\ve}
\]
in the usual way, as follows.  Let $\sh$ denote the Hilbert space closure of the set of all smooth $L$-valued $(0,q+1)$-forms $\beta$ such that the norm $||\cdot ||_{\sh}$ associated to the inner product
\[
(\beta,\gamma)_{\sh} := (T^*\beta,T^*\gamma)+(S\beta, S\gamma)
\]
is finite.  (Lemma \ref{a-priori} shows that $(\cdot ,\cdot)_{\sh}$ is an inner product.)  Define the functional $\ell :\sh \to \C$ by 
\[
\ell (\beta) := (\beta , \alpha _{\ve}) = \int _{\Omega} \left < \beta,\alpha _{\ve}\right >_{\omega} e^{-\vp+\lambda _Z} dV_{\omega}.
\]
Then the estimate \eqref{twist-box-est} shows that $\ell \in \sh ^*$, the dual space of bounded linear functionals on $\sh$, and the $\sh^*$-norm of $\ell$ is controlled by $\delta^{-1} C_{\ve}$.  By the Riesz Representation Theorem there exists $V_{\ve} \in \sh$ such that 
\begin{equation}\label{boxie-ests}
||V_{\ve} ||_{\sh}^2 = ||\ell||_{\sh^*}^2 \le \delta ^{-1}C_{\ve} \quad \text{and} \quad (V_{\ve},\beta) = (T^*\alpha _{\ve} , T^*\beta) + (S\alpha_{\ve}, S\beta).
\end{equation}
The latter says that $\Box V_{\ve} = \alpha _{\ve}$.  Moreover, since $\Box$ is elliptic, $V_{\ve}$ is smooth on $\Omega$.  Now, since $ST=0$ and $S \alpha _{\ve} = \sqrt{\tau} \dbar \alpha _{\ve} = 0$, we find that 
\[
0=(S\Box V_{\ve} , SV_{\ve}) = ||S^*SV_{\ve}||^2 
\]
and thus 
\[
||SV_{\ve}||^2 = (S^*SV_{\ve},V_{\ve}) = 0.
\]

Now let $\Omega =\Omega _j$ and obtain a smooth, $L-E_Z$-valued $(0,q+1)$-form $V_{\ve,j}$ such that 
\[
\Box V_{\ve,j} = \alpha _{\ve} \quad \text{and} \quad ||V_{\ve,j}||^2_{\sh _j} \le \frac{C_{\ve}}{\delta}.
\]

Set $v_{\ve,j} := T^*V_{\ve,j}$.  It follows that
\[
Tv_{\ve} = \Box V_{\ve} = \alpha _{\ve}.
\]
and the following theorem is proved.

\begin{d-thm}\label{T-eq}
The equation $Tv_{\ve,j}= \alpha _{\ve}$ has a smooth solution $v_{\ve,j}$ satisfying the $L^2$-estimate
\[
\int _{\Omega} |v_{\ve,j}|^2_{\omega} e^{-\vp+\lambda _Z} dV_{\omega} \le \frac{C_{\ve}}{\delta}.
\]
\end{d-thm}

\subsection{Construction of a smooth extension on $\Omega _j$ with uniform bound}

Set 
\[
u _{\ve,j} := \chi _{\ve} \tilde \eta - \sqrt{\tau +A} v_{\ve,j} \tensor f_Z.
\]
Then 
\[
u_{\ve,j} |_Z = \xi \quad \text{and} \quad \dbar u_{\ve,j} = f_Z \tensor (\alpha _{\ve} - Tv_{\ve,j}) = 0.
\] 
Since $\chi _{\ve}$ is bounded and supported on a set whose measure tends to $0$ with $\ve$, there exists $\ve _j >0$ sufficiently small so that whenever $\ve \le \ve _j$, one has 
\[
\int _{\Omega _j}|u_{\ve,j}|^2_{\omega}e^{-\vp} dV_{\omega} \le (1+o(1)) \int _{\Omega _j} (\tau +A) |v_{\ve,j}|^2_{\omega} |f_Z|^2e^{-\vp} dV_{\omega} \le \int _{\Omega _j} (e^v(\tau +A)) |v_{\ve,j}|^2_{\omega} e^{-\vp+\lambda _Z} dV_{\omega}.
\]
Now, $e^v (\tau +A) = (e^{-\gamma/\delta} e^{-a/\delta}-\ve ^2) (a+h(a) +2e^{a-1})$ remains bounded as $\ve \to 0$, and in that limit it is at most $5e^{(\gamma -1)/\delta}$.  It follows that for some $\ve _j$ sufficiently small, the estimate
\[
\int _{\Omega_j}|u_{\ve,j}|^2_{\omega}e^{-\vp} dV_{\omega} \le (1+o(1)) \frac{5e^{(\gamma -1)/\delta}}{\delta} \int _{Z} \frac{|\xi|^2_{\omega} e^{-\vp}}{|df_Z|^2e^{-\lambda _Z}} \frac{\omega ^{n-1}}{(n-1)!} \le \frac{C}{\delta} \int _{Z} \frac{|\xi|^2_{\omega} e^{-\vp}}{|df_Z|^2e^{-\lambda _Z}} \frac{\omega ^{n-1}}{(n-1)!}
\]
holds, as soon as $0< \ve\le \ve _j$.  Thus for any such $\ve >0$, $u_{\ve,j}$ gives the desired extension in $\Omega_j$.  Let us write 
\[
u _j := u_{\ve _j, j}.
\]
To summarize, for each $j$ we have found a  smooth $L$-valued $(0,q)$-form $u_j$ on $\Omega _j$ such that 
\begin{equation}\label{bound-for-min-ext}
\dbar u_j = 0, \quad u_j|_{Z\cap\Omega _j}=\xi, \quad \text{and} \quad \int _{\Omega_j} |u_j|^2e^{-\vp}dV_{\omega} \le \frac{C}{\delta} \int _{Z} \frac{|\xi|^2_{\omega} e^{-\vp}}{|df_Z|^2e^{-\lambda _Z}} \frac{\omega ^{n-1}}{(n-1)!}.
\end{equation}
In particular, the right hand side is independent of $j$.

\begin{rmk}
Observe that one may take $\gamma = 1+\delta$ in the above construction , which makes the constant $C$ independent of $\delta$.
\red
\end{rmk}

\subsection{Minimizing the norm of a smooth extension}

Consider the affine subspace 
\[
\sh _j \subset L^2(\omega, e^{-\vp}) 
\]
obtained by taking the closure of all smooth $L$-valued $(0,q)$-forms $\tilde u$ satisfying 
\begin{equation}\label{E:H-ext}
\dbar\tilde u = 0 \quad \text{and} \quad \tilde u|_{Z\cap \Omega _j} = \xi.
\end{equation}
By \eqref{bound-for-min-ext} $\sh _j$ is not empty.  Let $U_j$ be the element of $\sh _j$ having minimal norm.  

\begin{lem}\label{L:1} $U_j$ is orthogonal to the subspace
\[
V :=\left\{\beta\,\, L\text{-valued}\,\, (0,q)\text{-forms}:\dbar\beta =0, \beta|_{Z\cap \Omega _j} =0\right\}
\]
in $L^2\left(\omega, e^{-\vp}\right)$.
\end{lem}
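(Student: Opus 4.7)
The plan is to apply the standard first-order optimality condition for a closed affine subspace of a Hilbert space. First I would observe that $\sh_j$ is an affine subspace of $L^2(\omega, e^{-\vp})$: indeed, if $\tilde u \in \sh_j$ and $\beta \in V \cap L^2(\omega, e^{-\vp})$, then $\tilde u + t\beta$ is $\dbar$-closed and satisfies $(\tilde u + t\beta)|_{Z\cap \Omega_j} = \xi + t \cdot 0 = \xi$, so $\tilde u + t\beta \in \sh_j$ for every $t \in \C$. Moreover, $\sh_j$ is nonempty by the extension $u_j$ produced in \eqref{bound-for-min-ext}, and it is closed by the very definition (one takes the closure of the smooth forms satisfying \eqref{E:H-ext}). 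Hence by the projection theorem for Hilbert spaces the minimizer $U_j$ exists and is unique.

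Next I would use this minimization property. For any $\beta \in V \cap L^2(\omega, e^{-\vp})$ and any $t \in \C$, since $U_j + t\beta \in \sh_j$, the minimality of $U_j$ yields
\[
\|U_j\|^2 \le \|U_j + t\beta\|^2 = \|U_j\|^2 + 2\Re\bigl(\bar t\, (U_j,\beta)\bigr) + |t|^2 \|\beta\|^2.
\]
Thus the real-valued function $t \mapsto 2\Re(\bar t(U_j,\beta)) + |t|^2\|\beta\|^2$ attains its minimum at $t=0$. Writing $t = s(U_j,\beta)$ with $s \in \R$ small and differentiating at $s=0$ forces $|(U_j,\beta)|^2 = 0$, i.e.\ $(U_j,\beta) = 0$. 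Since $\beta \in V \cap L^2(\omega,e^{-\vp})$ was arbitrary, this is exactly the claim that $U_j$ is orthogonal to $V$ in $L^2(\omega,e^{-\vp})$.

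There is essentially no obstacle here: this is the classical first variation for a closed convex (affine) subset of Hilbert space, and the only point that needs verifying is that the perturbation $U_j + t\beta$ actually stays in $\sh_j$, which is immediate from the linearity of $\dbar$ and of the restriction map. (If one wishes to be fastidious about $\beta \in V$ that are not a priori square-integrable, the orthogonality statement is understood as $(U_j,\beta) = 0$ for all $\beta \in V \cap L^2(\omega, e^{-\vp})$, which is the only pairing that makes sense in the ambient Hilbert space.)
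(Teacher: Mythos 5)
Your proof is correct and takes essentially the same approach as the paper: both perturb the minimizer $U_j$ by a multiple of an element $\beta\in V$ (which stays in the affine space $\sh_j$) and invoke minimality of the norm. The paper merely short-circuits your first-variation argument by choosing the optimal perturbation $\alpha = \frac{(U_j,\beta)\,\beta}{\|\beta\|^2}$ outright, obtaining the strict decrease $\|U_j-\alpha\|^2 = \|U_j\|^2 - \frac{|(U_j,\beta)|^2}{\|\beta\|^2}$ and hence a contradiction.
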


\begin{proof}
Suppose there exists $\beta_0\in V$ such that $\left(U_j,\beta_0\right)=c\neq 0$. Consider $\alpha :=\frac{c\beta_0}{\|\beta_0\|^2}\in V$ and set $\tilde U_j=U_j-\alpha$. It follows that $\tilde U_j$ satisfies \eqref{E:H-ext}, but $\left\|\tilde U_j\right\|^2=\left\| U_j\right\|^2 -\frac{|c|^2}{\|\beta_0\|^2}$. This contradicts the minimality of $\| U_j\|$.
\end{proof}

\begin{lem}\label{L:2}
$U_j$ belongs to the domain of $\dbar^*$ and $\dbar^* U_j=0$.
\end{lem}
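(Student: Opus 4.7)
The plan is to combine Lemma \ref{L:1} with a density argument: since $U_j \perp V$ in $L^2(\Omega_j, e^{-\vp})$ and the inner product is continuous, we already have $U_j \perp \overline V$, the $L^2$-closure of $V$. The lemma then reduces to showing that for every smooth $L$-valued $(0,q-1)$-form $\phi$ with compact support in $\Omega_j$, the form $\dbar\phi$ lies in $\overline V$; this gives $(U_j, \dbar\phi) = 0$, which is precisely the assertion that $U_j$ is in the domain of $\dbar^*$ and $\dbar^* U_j = 0$.

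To establish the reduction, I would regularize $\phi$ by excising a shrinking neighborhood of $Z$. Working with the globally defined function $g := |f_Z|^2 e^{-\lambda_Z}$, which vanishes precisely on $Z$ and is bounded above by $1$, set $\phi_\ve := (1-\chi_\ve)\phi$, where
\[
\chi_\ve := \chi\!\left(\frac{\log g - \log\ve}{-\log\ve}\right)
\]
and $\chi \in \sC^\infty(\R;[0,1])$ satisfies $\chi \equiv 1$ on $(-\infty, 0]$ and $\chi \equiv 0$ on $[1,\infty)$. Then $\chi_\ve$ is globally smooth and identically $1$ on $\{g \le \ve\}$, so $\phi_\ve$ is smooth, compactly supported in $\Omega_j$, and vanishes in a full neighborhood of $Z$. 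Consequently $\dbar\phi_\ve$ is smooth, compactly supported, $\dbar$-closed, and vanishes near $Z$, placing it in $V$. Expanding
\[
\dbar \phi_\ve = (1-\chi_\ve)\, \dbar\phi - \dbar\chi_\ve \wedge \phi,
\]
the first term tends to $\dbar\phi$ in $L^2(\Omega_j, e^{-\vp})$ by dominated convergence (since $\chi_\ve$ is bounded and its support shrinks to the measure-zero set $Z$), and the problem is reduced to showing that the second term tends to $0$ in $L^2$.

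I expect the $L^2$ control of $\dbar\chi_\ve \wedge \phi$ to be the main obstacle. A linear cutoff of width $\ve$ is insufficient: on the tube $\{g \le \ve\}$, of volume of order $\sqrt\ve$, $|\dbar\chi_\ve|$ has order $1/\sqrt\ve$, leaving $\|\dbar\chi_\ve\|_{L^2}$ bounded away from $0$. The logarithmic cutoff above behaves much better: a direct computation yields the pointwise bound
\[
|\dbar\chi_\ve|^2_\omega \le \frac{C}{|\log\ve|^{2}} \left( \frac{|df_Z|^2_\omega}{|f_Z|^2} + 1 \right) \quad \text{on } \{\ve \le g \le 1\},
\]
and the elementary planar estimate $\int_{\sqrt\ve \le |w| \le 1} |w|^{-2}\, dA(w) = O(|\log\ve|)$ along the transverse direction gives
\[
\int_{\Omega_j} |\dbar\chi_\ve|^2_\omega\, e^{-\vp}\, dV_\omega = O(|\log\ve|^{-1}) \;\longrightarrow\; 0.
\]
Thus $\|\dbar\chi_\ve \wedge \phi\|_{L^2} \to 0$, so $\dbar\phi_\ve \to \dbar\phi$ in $L^2$, completing the reduction and with it the proof.
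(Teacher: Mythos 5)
Your cutoff construction is sound, and in fact your logarithmic cutoff is better behaved than the paper's own choice: the paper uses the linear-scale cutoff $\chi_\epsilon = \chi(|f_Z|^2/\epsilon^2)$ and controls the error term by the rather heuristic bound $\sup|\dbar\chi_\epsilon|\cdot\mathrm{Vol}(\mathrm{supp}\,\dbar\chi_\epsilon)\le K$, whereas your estimate $\int|\dbar\chi_\ve|^2_\omega e^{-\vp}dV_\omega = O(|\log\ve|^{-1})$ is a genuine $L^2$ bound (it is essentially the mechanism behind Demailly's Lemme 6.9, which the paper invokes separately to get $\dbar^*U_j=0$ across $Z$). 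Up to that point your argument runs parallel to the paper's: Lemma \ref{L:1} applied to $\dbar$ of a form cut off near $Z$, plus dominated convergence.

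The gap is in the functional-analytic reduction. Knowing $(U_j,\dbar\phi)=0$ for all \emph{compactly supported} smooth $\phi$ only says that the formal (distributional) adjoint of $\dbar$ annihilates $U_j$ in the interior of $\Omega_j$; it is \emph{not} "precisely the assertion that $U_j\in\mathrm{Dom}(\dbar^*)$ and $\dbar^*U_j=0$." The paper defines $\mathrm{Dom}(\dbar^*)$ as the Hilbert-space adjoint domain: one needs $|(U_j,\dbar v)|\le c\|v\|$ for \emph{every} $v\in\mathrm{Dom}(\dbar)$, and on the bounded domain $\Omega_j$ this encodes the $\dbar$-Neumann boundary condition on $\di\Omega_j$; compactly supported forms are not dense in $\mathrm{Dom}(\dbar)$ in the graph norm, so your class of test forms is too small. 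A concrete counterexample to your reduction: on the unit disk with the flat metric, $u=d\bar z$ satisfies $(u,\dbar\phi)=0$ for every compactly supported $\phi$, yet
\[
(u,\dbar v)=\overline{\int_{\D}\frac{\di v}{\di\bar z}\,dA}=\frac{1}{2i}\,\overline{\oint_{\di\D}v\,dz},
\]
which is not bounded by $\|v\|_{L^2(\D)}$, so $u\notin\mathrm{Dom}(\dbar^*)$. This is exactly why the paper inserts its cutoff into an \emph{arbitrary} $v\in\mathrm{Dom}(\dbar)$ (so that $\dbar(\chi_\epsilon v)\in V$ and the boundary behavior of $v$ is retained), rather than testing only against compactly supported forms; your argument does not directly repair to that setting, because for $v$ merely in $\mathrm{Dom}(\dbar)$ the term $\|\dbar\chi_\ve\wedge v\|$ can no longer be bounded by $\sup|\phi|\cdot\|\dbar\chi_\ve\|_{L^2}$. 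To be fair, the interior statement you do prove ($\Box_0 U_j=0$ in the sense of distributions on $\Omega_j$) is all the paper actually uses from this lemma to conclude smoothness of $U_j$; but it is weaker than the lemma as stated, and the discrepancy should be addressed, not elided.
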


\begin{proof}
Elements in Dom$\left(\dbar^*\right)$ are those $L^2$ $(0,q)$-forms $u$ satisfying
\[
\left|\left(u,\dbar v\right)\right|\leq c\|v\|\qquad\forall v\in\text{Dom}(\dbar)
\]
for some constant $c$.

Let $\chi\in C^\infty\left(\mathbb{R}\right)$, $\chi(x)=0$ is $|x| <\frac 12$, $\chi(x)=1$ if $|x| >1$. For $\epsilon >0$, set $\chi_\epsilon (z)=\chi\left(\frac{|f(z)|^2}{\epsilon^2}\right)$ where $\{f=0\}$ defines $Z$ as given by hypothesis. Note that for any $v\in\text{Dom}(\dbar)$, Lemma \ref{L:1} implies
\begin{equation}\label{E:2}
\left(U_j,\dbar\left(\chi_\epsilon v\right)\right)=0.
\end{equation}

However, the Cauchy-Schwarz inequality yields
\begin{align*}
\left|\left(U_j,\dbar\left(v-\chi_\epsilon v\right)\right)\right|&=\left|\left(U_j,\dbar v,\chi_\epsilon\dbar v\right)+\left(U_j,\left(\dbar\chi_\epsilon\right)v\right)\right| \\
&\leq \left\| U_j\right\|\, \left\|\dbar v-\chi_\epsilon\dbar v\right\|+\sup\left|\dbar\chi_\epsilon\right|\cdot\text{Vol}\left(\text{supp}(\dbar\chi_\epsilon)\right)\left\|U_j\right\|\, \|v\|.
\end{align*}
The term $\left\|\dbar v-\chi_\epsilon\dbar v\right\|\longrightarrow 0$ as $\epsilon\to 0$ by dominated convergence, since $\dbar v\in L^2$. Additionally,
elementary estimates show
\[
\sup\left|\dbar\chi_\epsilon\right|\cdot\text{Vol}\left(\text{supp}(\dbar\chi_\epsilon)\right)\leq K
\]
for a constant $k$ independent of $\epsilon$. Thus, $U_j\in\text{Dom}\left(\dbar^*\right)$.

The family of equations in \eqref{E:2} show that $\dbar ^*U_j = 0$ on $\Omega _j - Z$. But a simple modification of Lemme 6.9 in \cite{dem-82a} then yields that $\dbar ^*U_j = 0$ on $\Omega _j$.
\end{proof}

   It follows from Lemma \ref{L:2} that $\Box _0 U_j = 0$, and thus $U_j$ is smooth.  Moreover, by \eqref{bound-for-min-ext} and the minimality of $U_j$ we have 
\[
\int _{\Omega_j}|U_j|^2e^{-\vp} dV_{\omega} \le \frac{C}{\delta} \int _{\delta} \int _{Z} \frac{|\xi|^2_{\omega} e^{-\vp}}{|df_Z|^2e^{-\lambda _Z}} \frac{\omega ^{n-1}}{(n-1)!}.
\] 
Finally, $U_j|_{Z\cap\Omega_j}=\xi$.  Indeed, there is a sequence of smooth extensions $h_k$ of $\xi$ that converge to $U_j$ in $L^2(dV_{\omega}, \vp)$, and by convolving all of these with an approximate identity supported near any point of $Z$ and taking a limit, one can see that $U_j$ is an extension of $\xi$ to $\Omega_j$.

\subsection{Conclusion of the proof of Theorem \ref{ambient-main}}

The final step requires choosing a subsequence $U_{j_{\ell}}$ that converges to a form $U$ on $X$ that extends $\xi$.  To this end, first extend $U_j$ by $0$ to all of $X$.  Evidently the $L^2(dV_{\omega}, \vp)$-norm of the extension is uniformly bounded in $j$, so by Alaoglu's Theorem, there exists a subsequence $\left\{U_{j_\ell}\right\}$ converging weakly to $U$ on $X$.

Note that, in the sense of distributions, $\Box _0 U = 0$, so that indeed $U$ is smooth and $\dbar$-closed.  Moreover,
\[
\int _X|U|^2e^{-\vp} dV_{\omega} \le \frac{C}{\delta} \int _{Z} \frac{|\xi|^2_{\omega} e^{-\vp}}{|df_Z|^2e^{-\lambda _Z}} \frac{\omega ^{n-1}}{(n-1)!}
\]
But since $U_j$ converge to $U$ in the sense of distributions, and for any compact subset $K \subset X$ there exists $j >>0$ such that $K \relcomp \Omega _j$, the same argument used above to prove that $U_j|_{\Omega_j}=\xi$, with the $U_j$ taking the place of the $h_k$, shows that $U|_Z = \xi$.  This completes the proof of Theorem \ref{ambient-main}.
\qed

\section{Proof of Theorem \ref{main-i-coh}}

As previously mentioned, Theorem \ref{main-i-coh} is due to Berndtsson in the case where $X$ (and thus $Z$) is compact.  Our approach to the proof is similar in spirit to that of Berndtsson's, but we must take an additional step to overcome an issue that arises from the non-compactness.

\subsection{The complete hyperbolic geometry of a neighborhood of $Z$}

Locally near $Z$, $X-Z$ has the structure of a ball crossed with a punctured disk.  Following Berndtsson \cite[Lemma 2.2]{bo-forms}, we begin by establishing a lemma that constructs a good cut-off function on such a product.

\begin{lem}\label{hyp-lem}
Let $\ve > 0$ be given.  Then there is a smooth function $\rho_{\ve} : B \times \D \to [0,1]$ such that 
\begin{enumerate}
\item[(a)] $\rho_{\ve} (z,t) = 1$ for $|t|\le \ve$ and $\rho _{\ve} (z,t)=0$ for $2\ve\le |t| \le 1$, and
\item[(b)] there exists a constant $C>0$ such that 
\[
\int _{B\times \D} \ii \di \rho _{\ve} \wedge \dbar \rho _{\ve} \wedge (\ii \di \dbar (|z|^2+|t|^2))^{n-1} \le C\ve.
\]
\end{enumerate}
\end{lem}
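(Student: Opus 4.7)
The plan is to construct $\rho_\ve$ depending only on $|t|$, reducing (b) to a one-dimensional radial integral on $\D$. Set $\rho_\ve(z,t) := \chi_\ve(|t|)$ where $\chi_\ve : [0,\infty) \to [0,1]$ is smooth, nonincreasing, equal to $1$ on $[0,\ve]$ and vanishing on $[2\ve,\infty)$, so (a) is automatic.

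For (b), a direct computation using $\di|t| = \tfrac{\bar t}{2|t|}\,dt$ gives
\[
\ii\,\di\rho_\ve \wedge \dbar\rho_\ve \;=\; \tfrac14\,|\chi_\ve'(|t|)|^2\,\ii\,dt\wedge d\bar t,
\]
a $(1,1)$-form whose components lie purely in the $t$-direction. Writing $\omega_0 := \ii\di\dbar(|z|^2+|t|^2) = \omega_z + \omega_t$ with $\omega_z := \ii\sum_{j=1}^{n-1} dz_j\wedge d\bar z_j$ the Euclidean K\"ahler form on $B\subset\C^{n-1}$ and $\omega_t := \ii\,dt\wedge d\bar t$, the identity $\omega_t^{\,2} = 0$ kills every term of the binomial expansion $\omega_0^{n-1} = \omega_z^{n-1} + (n-1)\,\omega_z^{n-2}\wedge\omega_t$ except $\omega_z^{n-1}$ upon wedging against $\ii\,\di\rho_\ve\wedge\dbar\rho_\ve$. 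Fubini then factors the integral as
\[
\int_{B\times\D} \ii\,\di\rho_\ve\wedge\dbar\rho_\ve \wedge \omega_0^{n-1} \;=\; C_n\,\vol(B)\int_0^1 |\chi_\ve'(r)|^2\,r\,dr,
\]
for an explicit combinatorial constant $C_n$ depending only on the dimension.

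The geometric content of the lemma is thus the factorization via $\omega_t^{\,2}=0$, which decouples the $z$- and $t$-directions and reduces the $n$-dimensional integral to a one-dimensional radial one. The remaining step is to choose the 1-D profile $\chi_\ve$ subject to (a) so that $\int_0^1|\chi_\ve'(r)|^2\,r\,dr$ is appropriately bounded; this is the main obstacle, since the $\ve$-scaling in the target estimate requires the cutoff to be sharply calibrated to the annulus geometry rather than a naive rescaling of a fixed mollifier. Once the right profile is identified, the verification of the bound reduces to an elementary change of variables on $[\ve,2\ve]$.
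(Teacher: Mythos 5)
Your reduction to a one-variable radial problem is correct, and it matches the paper's strategy: the paper's cutoff also depends only on $t$, and Fubini handles the $z$-integration. The gap is precisely the step you deferred to the end. No profile $\chi_\ve$ transitioning on $[\ve,2\ve]$ can make the energy small: the quantity
\[
\inf\left\{ \int_\ve^{2\ve} |\chi'(r)|^2\, r\, dr \ :\ \chi(\ve)=1,\ \chi(2\ve)=0 \right\}
\]
is (up to normalization) the capacity of the condenser $\{\ve\le|t|\le 2\ve\}$; the Euler--Lagrange equation $(r\chi')'=0$ gives the minimizer $\chi(r)=\log(2\ve/r)/\log 2$, and the minimum equals $1/\log 2$ --- a positive constant \emph{independent of} $\ve$. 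So "sharply calibrating the profile" cannot work: the obstruction is the conformally invariant modulus $\log(r_2/r_1)$ of the transition annulus, and condition (a) pins that modulus at $\log 2$. In fact, since the same slice-by-slice lower bound applies to any (even $z$-dependent) function satisfying (a), conditions (a) and (b) are genuinely incompatible for small $\ve$.

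The paper's proof evades exactly this obstruction by letting the transition occur across an annulus of \emph{unbounded} modulus. It sets $\rho_\ve = \chi_\ve\circ\rho$ with $\rho(t)=\log\log|t|^{-2}$, where $\chi_\ve$ goes from $0$ to $1$ on the interval $[\ve^{-2}-1,\ \ve^{-2}]$ with $|\chi_\ve'|\le 2$. Thus $\rho_\ve$ drops from $1$ to $0$ across the annulus $\{ e^{-e^{\ve^{-2}}} \le |t|^2 \le e^{-e^{\ve^{-2}-1}} \}$, whose modulus is comparable to $e^{\ve^{-2}}$. The identity $\ii\,\di\rho\wedge\dbar\rho = -\ii\,\di\dbar\rho$ (the statement that $\rho$ has unit Poincar\'e gradient, i.e.\ the completeness of the Poincar\'e metric of the punctured disk) converts the energy into at most $4$ times the Poincar\'e area of the transition annulus, which a one-line polar-coordinate computation shows is $O(e^{-1/\ve})\le C\ve$. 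Note the consequence: the paper's own construction also does not satisfy (a) as literally stated --- it equals $1$ only on a double-exponentially small disk $\{|t|\le r(\ve)\}$ with $r(\ve)\to 0$. By the capacity bound above this weakening is unavoidable, and it is all that the subsequent application uses: there one only needs $\rho_\ve\equiv 1$ near $t=0$, support shrinking to $\{t=0\}$, and energy tending to $0$. So to repair your argument you must both relax (a) in this way and replace the $[\ve,2\ve]$ annulus by one whose modulus blows up as $\ve\to 0$.
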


\begin{rmk}
This lemma is a consequence of the fact that the punctured disk is complete with respect to its Poincar\'e metric $- dd^c (\log \log |t|^{-2}) $.  Indeed, a metric is complete if and only if one can find a proper function whose gradient is bounded with respect to the metric.  For the punctured disk, an example of such a function is the superharmonic function 
\[
\rho (t) = \log \log |t|^{-2},
\]
which satisfies 
\[
\di \rho (t) = \frac{-dt}{t\log |t|^{-2}} \quad \text{and} \quad \di \dbar \rho (t) = - \frac{dt \wedge d\bar t}{|t|^2(\log |t|^{-2})^2},
\]
and therefore 
\[
\di \rho \wedge \dbar \rho = - \di \dbar \rho.
\]
The latter equation is precisely the statement that the norm of $\di \rho$ with respect to the Poincar\'e metric is a positive constant. 
\red
\end{rmk}

\begin{proof}[Proof of Lemma \ref{hyp-lem}]
Take 
\[
\rho _{\ve} (t) := \chi _{\ve} \circ \rho (t),
\]
where $\chi _{\ve}$ is any function that takes values in $[0,1]$, satisfies 
\[
\chi _{\ve} (x) = \left \{ 
\begin{array}{l@{\quad}r}
0 & 0 \le x \le \ve ^{-2}-1\\
1 & x \ge \ve ^{-2} 
\end{array}
\right .  
\]
and has derivative $\chi_{\ve} '(x) \le 2$.  Then by Fubini's Theorem 
\begin{eqnarray*}
&& \int _{B\times \D} \ii \di \rho _{\ve} \wedge \dbar \rho _{\ve} \wedge (\ii \di \dbar (|z|^2+|t|^2))^{n-1} \\
&=& C' \int _{|t|^2 \le e^{-e^{\ve^{-1}}}}\frac{4 dA(t)}{|t|^2(\log |t|^{-2})^2} = 2 C'e^{-1/\ve} \\
&\le&  C \ve
\end{eqnarray*}
for some $C>0$.  The proof is finished.
\end{proof}

It is possible to choose coordinate charts $\{(U_j,z_j)\}_{j \ge 1}$ on $X$ with the following properties.  
\begin{enumerate}
\item[(i)] $Z \subset \bigcup _{j \ge 1} U_j$ and there exists $N \sim 2^{n-1}$ such that each point of $Z$ lies in $\le N$ of the $U_j$.
\item[(ii)] There is a biholomorphic map $F_j :U_j \to B\times \D$ such that, with $F_j := (z'_j,z^n_j)$, 
\[
F_j ^{-1}(0,0)\in Z ,\quad  F_j^{-1}(\{0\}\times \D) \perp Z \quad \text{ and }\quad U_j \cap Z = \{z_j ^n=0\}.
\]
\item[(iii)] For all $j$ there exists $C_j \ge 1$ such that $F_{j*}\omega \le C_j dd^c |z|^2$ on $F_j(U_j) = B\times \D$.
\end{enumerate}
Indeed, one can simply use K\"ahler coordinates in a sufficiently small neighborhood of each point of $Z$.  Usually the norm in property (iii) may have to be much larger than $1$, since the hypersurface $Z$ may become highly curved.  

Now let $\{\chi _j\}$ be a partition of unity subordinate to the open cover $\{U_j\}$ of some neighborhood of $Z$.  Consider the function
\[
\rho := \sum _j \chi _j F_j ^* \rho _{\ve _j}
\]
for constants $\ve _j$ to be chosen in a moment.  Then \begin{equation}\label{rho=1onZ}
\rho |_{U(Z)} \equiv 1,
\end{equation}
for some neighborhood $U(Z)$ of $Z$, whose closure is contained in another neighborhood $V(Z)$ in which $\rho$ is supported.  Moreover, $V(Z)$ can be made as small as desired in the sense that for all $p \in V(Z)$ the distance ${\rm dist}(p, Z)$ is as small as desired, and going as rapidly as desired to zero as $p$ goes to infinity, i.e., exits every compact subset of $X$.  Define $\rho$ to be identically $0$ outside $V(Z)$, thereby extending $\rho$ to all of $X$.  Then for any smooth function $h$ on $X$ and any positive $\ve > 0$, 
\[
\int _{X}  |\di \rho|^2_{\omega}e^{h}\omega ^n \le  2 \sum _{j =1} ^{\infty} \int _{B \times \D}e^{F_{j*}h} \ii \left (  \di \rho _{\ve _j} \wedge \dbar \rho _{\ve _j} + \rho _{\ve _j}^2 F_{j*}(\di \chi _j \wedge \dbar \chi _j)\right ) \wedge F_{j*} \omega ^{n-1}\le\ve^2,
\]
provided $\ve _j$ is chosen sufficiently small.  

\subsection{Extensions of exact intrinsic forms and the proof of Theorem \ref{main-i-coh}}

Now let the notation be that of Theorem \ref{intrinsic-main} (and therefore Theorem \ref{main-i-coh}).  Let $\alpha$ be a smooth, $\dbar$-exact, $L$-valued $(0,q)$-form on $Z$ such that 
\[
\int _Z \frac{|\alpha|^2_{\omega}e^{-\vp}}{|df_Z|^2e^{-\lambda _Z}}  \omega ^{n-1} <+\infty.
\]
By Theorem \ref{intrinsic-main} there exists a smooth, $L$-valued $(0,q)$-form $\beta$ on $X$ such that 
\[
\iota ^* \beta = \alpha \quad \text{and} \quad \int _X |\beta|^2_{\omega} e^{-\vp} \omega ^n < +\infty.
\]
By Lemma \ref{3-weights} there exists a smooth $L$-valued $(0,q-1)$-form $\theta$ on $X$ such that 
\[
\dbar \theta = \beta.
\]
In particular, since $\theta \in L^2_{\ell oc}(X)$, there exists a smooth function $h$ on $X$, which we may assume is as plurisubharmonic as we need, such that 
\[
\int _{X} |\theta|^2 e^{-(\vp+h)} \omega ^{n} < +\infty.
\]
By Theorem \ref{intrinsic-main} there exists a smooth, $L$-valued $(0,q-1)$-form $\eta$ on $X$ such that 
\[
\iota ^*\eta = \theta \quad \text{and} \quad \int _X |\eta|^2_{\omega} e^{-(\vp+h)} \omega ^n < +\infty.
\]
(We don't need the full force of Theorem \ref{intrinsic-main} here; any smooth form $\eta$ in $L^2_{\ell oc}(X)$ would do, after we increase the function $h$.)  Now let $\tilde \alpha = \dbar (\rho \eta)$.  Then by \eqref{rho=1onZ}, $\iota ^* \tilde \alpha = \alpha$.  Moreover, 
\begin{eqnarray*}
\int _X |\tilde \alpha|^2_{\omega} e^{-\vp} \omega ^n &\le & 2  \int _X \rho ^2 |\beta|^2_{\omega} e^{-\vp} \omega ^n + 2 \int _X |\eta|^2_{\omega} e^{-\vp} \di \rho \wedge \dbar \rho \wedge \omega ^{n-1}\\
&\le& 2  \int _{V(Z)} |\beta|^2_{\omega} e^{-\vp} \omega ^n + 2 \left (\int _X |\eta|^2_{\omega} e^{-(\vp+h)} \omega ^n \right ) \left ( \int _{X} |\di \rho|^2_{\omega} e^h \omega ^n\right ).
\end{eqnarray*}
By choosing a sufficiently small neighborhood $V(Z)$ and sufficiently small function $\rho$,  the extension is made as small as desired.

Let $\eta$ be the form to be extended.  Write $\eta = \theta +\dbar \xi$ with $\theta$ orthogonal to the kernel of $\dbar$ (and therefore of minimal $L^2$-norm).  By Theorem \ref{intrinsic-main} there exists an extension of $\theta$  whose $L^2$-norm is controlled by a universal constant times $\kappa (\eta)$.  By the previous section, $\dbar \xi$ has an extension whose $L^2$-norm is as small as we like.  The proof of Theorem \ref{main-i-coh} is complete.
\qed

\section{Proof of Theorem \ref{main-a-coh}}

Theorem \ref{main-a-coh} reduces to Theorem \ref{main-i-coh} by a certain construction that isolates the `intrinsic part' of the ambient form to be extended.

\subsection{The local picture}
In order to clarify the approach, first consider the local picture, i.e., the case $X=B\times \D$.  Let $(z,w)$ be the coordinates.  Suppose $\xi$ is a section of $\Lambda ^qT^{*0,1}_{B\times \D} \to B$, which can be written
\[
\xi = h(z) + f(z) \wedge d\bar w,
\]
where $h$ is a $(0,q)$-form and $f$ is a $(0,q-1)$-forms, both on $B$.  Since $h(z)$ can be thought of as an intrinsic form on $w=0$ and thus is handled by Theorem \ref{main-i-coh}, it suffices to treat the case $h=0$.  Note that $h=0$ if and only if $\iota ^*\xi = 0$.  

Observe then that, with $\rho _{\ve}$ as in the previous section,  
\[
\dbar ((-1)^{q-1} \rho _{\ve}f(z) \bar w) = \rho _{\ve} f(z) \wedge d\bar w + \bar w \left ( f(z) \wedge \dbar \rho _{\ve} + \rho _{\ve} \dbar f (z) \right ).
\]
Thus the form agrees with $\xi$ on $\{w=0\}$ and has arbitrarily small $L^2$ norm on $B\times \D$.

\subsection{Intrinsic and transverse parts}
Let $\xi$ be an ambient $L$-valued $(0,q)$-form on $Z$.  Then we may write 
\[
\xi _{\pitchfork} := \xi - P^* \iota ^* \xi \quad \text{and} \quad \xi = \xi _{\pitchfork}  + P^*\iota ^* \xi.
\]
We remind the reader that $P^*$ is the operation on intrinsic forms defined by the orthogonal projection $P$ with respect to the K\"ahler form via the formula \eqref{kop}.  Observe that since $\iota ^* P^*= {\rm Identity}$, 
\[
\iota ^* \xi _{\pitchfork} = 0.
\]
We introduce the following definition.
\begin{defn}
An ambient $L$-valued $(0,q)$-form $\xi$ is said to be \begin{enumerate}
\item[(i)] {\it transverse} if $\iota ^* \xi = 0$, and 
\item[(ii)] {\it intrinsic} if $\xi = P^* \iota ^* \xi$.
\end{enumerate}
In the decomposition $\xi = \xi _{\pitchfork} + P^*\iota ^*\xi$, the ambient forms $\xi _{\pitchfork}$ and $P^*\iota ^*\xi$ are respectively called the transverse and intrinsic parts of $\xi$.
\red
\end{defn}

\subsection{End of the proof of Theorem \ref{main-a-coh}}

In view of Theorem \ref{main-i-coh}, Theorem \ref{main-a-coh} is proved as soon as the following proposition is established.

\begin{prop}\label{trans-part}
Let $\xi$ be a transverse ambient $L$-valued $(0,q)$-form on $Z$.  Then for any $\ve > 0$ there exists a $\dbar$-closed, $L$-valued $(0,q)$-form $u$ on $X$ such that 
\[
u|_Z = \xi \quad \text{and} \quad \int _X |u|^2 e^{-\vp} \omega ^n < \ve.
\]
\end{prop}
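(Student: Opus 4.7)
The strategy is to exploit the algebraic fact that a transverse form is a multiple of $d\bar f_Z$, and therefore can be written as the $\dbar$ of a primitive containing the factor $\overline{f_Z}$, which vanishes on $Z$ and is small in any thin tube around it.  First, since $\iota^*\xi = 0$ and (by the adjunction formula, just as in the proof of Proposition \ref{ext-no-estimates}) the conormal bundle $N_Z^*$ is isomorphic to $(E_Z^*)|_Z$ via the nowhere-vanishing section $df_Z$, there exists a smooth $L \otimes \overline{E_Z^*}$-valued $(0,q-1)$-form $g$ defined globally on $Z$ with
\[
\xi \;=\; d\bar f_Z \wedge g.
\]
Extend $g$ to a smooth $L \otimes \overline{E_Z^*}$-valued $(0,q-1)$-form $\tilde g$ on $X$.

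Next, following Section 6.1, take a cutoff function $\rho$ built from Lemma \ref{hyp-lem}: $\rho \equiv 1$ on some neighborhood $U(Z)$, supported in a slightly larger tubular neighborhood $V(Z)$, and satisfying the estimate
\[
\int_X |\dbar\rho|^2_\omega \, e^h \, \omega^n \;\le\; (\text{arbitrarily small})
\]
for any prescribed smooth function $h$.  Define
\[
u \;:=\; \dbar\bigl(\rho\,\overline{f_Z}\,\tilde g\bigr),
\]
which is $\dbar$-closed since $\dbar^2=0$.  The Leibniz rule gives
\[
u \;=\; \overline{f_Z}\,\dbar\rho \wedge \tilde g \;+\; \rho\, d\overline{f_Z}\wedge \tilde g \;+\; \rho\,\overline{f_Z}\,\dbar\tilde g,
\]
and on $Z$ the first and third summands vanish (as $\overline{f_Z}|_Z \equiv 0$) while $\rho|_Z \equiv 1$, so $u|_Z = d\bar f_Z \wedge g = \xi$.

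For the $L^2$ estimate, split $\int_X |u|^2 e^{-\vp}\omega^n$ into three pieces corresponding to the three summands of $u$.  The second and third integrals are supported in $V(Z)$ with integrands $\rho^2 |d\overline{f_Z}|^2 |\tilde g|^2 e^{-\vp}$ and $\rho^2 |f_Z|^2 |\dbar\tilde g|^2 e^{-\vp}$, both locally bounded and (using the $L^2$ hypothesis on $\xi$ inherited from Theorem \ref{main-a-coh}) integrable near $Z$; they become arbitrarily small as $V(Z)$ shrinks.  The main obstacle is the first integral $\int_X |\dbar\rho|^2 |f_Z|^2 |\tilde g|^2 e^{-\vp} \omega^n$: a naive cutoff varying from $1$ to $0$ across a tube of width $\ve$ gives $|\dbar\rho|^2 \sim \ve^{-2}$, which together with $|f_Z|^2 \sim \ve^2$ and the $O(\ve^2)$ tube volume yields only an $O(1)$ bound.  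The hyperbolic cutoff of Lemma \ref{hyp-lem} (exploiting completeness of the Poincar\'e metric on the punctured disk) is precisely designed to defeat this: absorbing the bounded, integrable factor $|f_Z|^2 |\tilde g|^2 e^{-\vp}$ into the weight $e^h$, the first integral also becomes arbitrarily small.  Choosing $V(Z)$ and the hyperbolic parameters $\ve_j$ small enough, the total is less than $\ve$, completing the proof.
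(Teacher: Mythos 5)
Your proposal is correct and takes essentially the same route as the paper's own proof: write $\xi = d\overline{f_Z}\wedge g$ using transversality and adjunction, extend $g$ smoothly to $\tilde g$, set $u = \dbar\bigl(\rho\,\overline{f_Z}\,\tilde g\bigr)$ with the hyperbolic cutoff of Lemma \ref{hyp-lem}, and make the three Leibniz terms small by shrinking the tube and absorbing $|f_Z|^2|\tilde g|^2e^{-\vp}$ into the weight $e^h$. The only cosmetic difference is that the paper avoids writing $\dbar \tilde g$ (the bundle $L \tensor \overline{E_Z^*}$ is not holomorphic, so that term is not intrinsically defined and only the combination multiplied by $\overline{f_Z}$ makes invariant sense), replacing it by an unspecified form $\beta$ times $\overline{f_Z}$; your argument is unaffected by this.
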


\begin{proof}
The initial claim is that, on $Z$, the section 
\[
d\overline{f_Z}
\]
of $\overline{E_Z}\tensor T^{*0,1}_X|_Z$ is well-defined.  Indeed, in terms of local expressions and transition functions, $f_Z$ is given by holomorphic functions $f_i\in \co (U_i)$ on open sets $U_i \subset X$ such that 
\[
f_i = g_{ij} f_j.
\]
Differentiation given $df_i= g_{ij} df_j + dg_{ij} f_j$, and restricting to $Z$ makes the second term on the right of this equality disappear.  Taking complex conjugates establishes the claim.

Since $\xi$ is transverse and since $f_Z$ generates the ideal sheaf of $Z$, $\xi$ is of the form 
\[
\xi = \alpha \wedge d \overline{f_Z}
\]
for some ambient $(0,q-1)$-form $\alpha$ with values in the line bundle $L \tensor \overline{E_Z^*} \to Z$.  Extend $\alpha$ to a $(0,q-1)$-form $\tilde \alpha$ with values in the line bundle $L \tensor \overline{E_Z^*} \to X$ in a smooth way.  There is no problem in finding such an extension; we are simply extending a smooth section of a smooth vector bundle.  

Now, the line bundle $L \tensor \overline{E_Z^*} \to X$ is not holomorphic, so it does not make sense to apply $\dbar$ to $\tilde \alpha$.  However, the $(0,q-1)$-form 
\[
\tilde \alpha \cdot \overline{f_Z}
\]
takes values in the line bundle $L$, as does 
\[
\hat \alpha := (-1) ^{q-1} \rho _{\ve} \tilde \alpha \cdot \overline{f_Z}.
\]
Now apply $\dbar$ to obtain 
\[
\dbar \hat \alpha = \rho _{\ve} \tilde \alpha \wedge d\overline{f_Z} + \left (\tilde \alpha \wedge \dbar \rho _{\ve} + \rho _{\ve} \beta \right ) \overline{f_Z}
\]
for some $(0,q)$-form $\beta$ with values in the line bundle $L \tensor \overline{E_Z^*}$.  Defining $u := \dbar \hat \alpha$, it is immediate that $u|_Z = \xi$.  Choosing $\rho _{\ve}$ sufficiently small, it is then also clear that the $L^2$-norm of $u$ can be made as small as desired.  The proof of Proposition \ref{trans-part}, and hence of Theorem \ref{main-a-coh}, is now complete.
\end{proof}

\end{document}